\newtheorem{lemma}{Lemma}
\newtheorem{theorem}{Theorem}
\newtheorem{proposition}{Proposition}
\newtheorem{definition}{Definition}
\newtheorem{remark}{Remark}
\newtheorem{fact}{Fact}
\newtheorem{assum}{Assumption}
\newcommand{\nth}[1]{$#1 {\rm - th }$}
\newcommand{\rad}{{\rm rad } }
\newcommand{\sexp}{{\rm sexp} }
\newcommand{\ord}{{\rm ord} }
\newcommand{\prk}{p{\rm -rk } }
\newcommand{\zprk}{\Z_p\hbox{-rk}}
\newcommand{\Z}{\mathbb{Z}}
\newcommand{\rg}[1]{\mbox{\bf #1}}
\newcommand{\eu}[1]{\mathfrak{#1}}
\newcommand{\id}[1]{\mathcal{#1}}
\newcommand{\Gal}{\mbox{ Gal }}
\newcommand{\Gals}{\hbox{\tiny Gal}}
\newcommand{\Ker}{\mbox{ Ker }}
\newcommand{\rest}{\mbox{Rest}}
\newcommand{\rf}[1]{(\ref{#1})}
\newcommand{\Norm}{\mbox{\bf N}}
\newcommand{\F}{\mathbb{F}}
\newcommand{\U}{\mathbb{U}}
\newcommand{\V}{\mathbb{V}}
\newcommand{\B}{\mathbb{B}}
\newcommand{\K}{\mathbb{K}}
\newcommand{\LK}{\mathbb{L}}
\newcommand{\KH}{\mathbb{H}}
\newcommand{\KL}{\mathbb{L}}
\newcommand{\M}{\mathbb{M}}
\newcommand{\T}{\mathbb{T}}
\newcommand{\D}{\mathbb{D}}
\newcommand{\Q}{\mathbb{Q}}
\newcommand{\N}{\mathbb{N}}
\newcommand{\pinf}{^{1/p^{\infty}}}
\def\ra{\rightarrow}
\def\wh{\widehat}
\newcommand{\ran}{\rangle}
\newcommand{\lan}{\langle}
\begin{document}
{\obeylines \small
\vspace*{-1.0cm}
\hspace*{4.5cm}Io penso positivo perch\'{e} son vivo
\hspace*{4.5cm}E finch\'e son vivo
\hspace*{4.5cm}Niente e nessuno al mondo
\hspace*{4.5cm}Potr\'a fermarmi dal ragionare\footnote{Giovanotti: {\em Io penso positivo}.}.
\vspace*{0.5cm}
\hspace*{6.8cm}To Ralph Greenberg

\vspace*{0.5cm}
\smallskip} 
\title[The Gross Conjecture] {The Gross-Kuz'min Conjecture for CM
fields } 
\author{Preda Mih\u{a}ilescu} \address[P. Mih\u{a}ilescu]{Mathematisches
Institut der Universit\"at G\"ottingen}
\email[P. Mih\u{a}ilescu]{preda@uni-math.gwdg.de}

\date{Version 4.0 \today}
\vspace{0.5cm}
\begin{abstract}
  Let $A' = \varprojlim_n A'_n$ be the projective limit of the
  $p$-parts of the ideal class groups of the $p$ integers in the
  $\Z_p$-cyclotomic extension $\K_{\infty}/\K$ of a CM number field
  $\K$. We prove in this paper that the $T$-part $(A')^-(T) = \{ 1 \}$
  for CM extensions $\K/\Q$. This fact has been conjectured for
  arbitrary fields $\K$ by Kuz'min in 1972 and was proved by Greenberg
  in 1973, for abelian extensions $\K/\Q$. Federer and Gross had shown
  in 1981 that $(A')^-(T) = \{ 1 \}$ is equivalent to the
  non-vanishing of the $p$-adic regulator of the $p$-units of $\K$.
\end{abstract}
\maketitle
\tableofcontents

\section{Introduction}
The purpose of this paper is to prove the truth in CM fields, of a 
conjecture named after Gross and Kuz'min. We start with some elementary
notation, which allows to expose in Theorem \ref{gc} the precise statement 
of the Conjecture when applied to CM fields.

Let $p$ be an odd prime and $\K \supset \Q[ \zeta ]$ be a galois
extension containing the \nth{p} roots of unity, while $(\K_n)_{n \in
  \N}$ are the intermediate fields of its cyclotomic $\Z_p$-extension
$\K_{\infty}$. Let $A_n = (\id{C}(\K_n))_p$ be the $p$-parts of the
ideal class groups of $\K_n$ and $A = \varprojlim_n A_n$ be their
projective limit. The subgroups $B_n \subset A_n$ are generated by
($p$-free powers of) the classes containing ramified primes above
$p$. If $\wp \subset \K$ is such a prime and $\rg{p} = [ \wp ]$ is its
class, having order $t \cdot p^q$ in the ideal class group, then
$\rg{p}^t \in B(\K)$. We shall assume for simplicity that the orders
of the primes above $p$ in $\K_n$ are all $p$-powers and we let
\begin{eqnarray}
\label{bram}
A'_n  =  A_n/B_n, \quad \quad
B  =  \varprojlim_n B_n, \quad A' = A/B.\nonumber
\end{eqnarray}
The quotients $A'_n$ arise also as ideal class groups of the ring of
$p$-integers $\id{O}(\K)[1/p]$, see \cite{Iw}, \S 4. We let $E_n =
(\id{O}(\K_n))^{\times}$ be the global units of $\K_n$ and $E'_n =
(\id{O}(\K_n)[ 1/p ])^{\times}$ be the $p$-units.

As usual the galois group $\Gamma = \Gal(\K_{\infty}/\K)$
and $\Lambda = \Z_p[ \Gamma ] \cong \Z_p[[ \tau ]] \cong \Z_p[[ T ]]$,
where $\tau \in \Gamma$ is a topological generator and $T =
\tau-1$. With this, the module $A$ is a finitely generated
$\Lambda$-torsion module. We let
\[ \omega_n = (T+1)^{p^{n-1}} - 1 \in \Lambda, \quad \nu_{n+1,n} =
\omega_{n+1}/\omega_n \in \Lambda. \] The groups abelian $A, A', B$ are
endowed with an action of $\Lambda$; we write the action of $\Lambda$ 
multiplicatively, so $a^T = \tau(a)/a$,
etc. It may be useful at places to skip for simplicity of notation to
additive written groups, and this shall be indicated in the text;
moreover, generic, abstract $\Lambda$-modules will be always written
additively, for the same reasons. We use the same notation for the
action of other group rings, such as $\Z_p[ \Delta ], \Z[ \Delta ]$,
etc.

Complex conjugation $\jmath \in \Gal(\K/\Q)$ acts on any module $X$
attached to $\K$ and its extensions, for instance $X = \id{O}(\K_n),
A_n, B_n$ but also the galois groups $\Gal(\KH_n/\K_n)$, etc.,
inducing a decomposition in plus and minus parts: $X^+ = (1+\jmath) X,
X^- = (1-\jmath) X$. Note that $p$ is odd, and since we shall be
concerned only with $p$-groups, division by $2$ is obsolete. If
$\M_n/\K_n$ is a $p$-abelian extension which is galois over $\Q$ and
with $\Gal(\K/\Q)$ acting on $\Gal(\M_n/\K_n)$, then we define
\begin{eqnarray}
\label{hilpm}
 \M_n^+ = \KL_n^{\Gals(\M_n/\K_n)^-}, \quad \M_n^- =
 \M_n^{\Gals(\M_n/\K_n)^+} .
\end{eqnarray}

If $M$ is a Noetherian $\Lambda$-torsion module and $f \in \Z_p[ T ]$
is a distinguished polynomial, we define the $f$-part and the
$f$-torsion of $M$ by
\begin{eqnarray}
\label{parts}
M(f) & = & \{ x \in M \ : \ \exists n > 0 : f^n x = 0 \}, \\ 
M[ f ] & = & \{ x \in M \ : \ f x = 0 \} \subset M(f) .\nonumber
\end{eqnarray}
Since $M$ is finitely generated, there is a minimal $n$ such that $f^n
M(f) = 0$, and we denote this by $\ord_f(M) = n$, the
$f$-order. 


Let $X$ be a finite abelian $p$-group; the constants 
\begin{eqnarray}
\label{sexp}
\exp(X) & = & \max\{ \ord(x) : x \in X\},\\
\sexp(X) & = & \min\{ \ord(x) \ : \ x \in X \setminus p X \} \nonumber
\end{eqnarray}
are the exponent, resp. the \textit{subexponent} of $X$. For
instance, if $X = C_p \times C_{p^3}$, then $\exp(X) = p^3$, but
$\sexp(X) = p$. We have $\exp(X) = \sexp(X)$ iff $X$ is the direct
product of cyclic groups of the same order.

Leopoldt emitted in 1962 the hypothesis that the $p$-adic regulator of
the units $E(\K)$ should be non-vanishing. His initial conjecture
referred to abelian fields $\K$ but it was soon accepted that one
should expect that the same happens in general for arbitrary number
fields $\K$. The statement for abelian fields could be proved in 1967
by Brumer \cite{Br}, using a $p$-adic variant of Baker's fundamental
result on linear forms in logarithms and an earlier argument of Ax
\cite{Ax}. Greenberg showed in 1973 \cite{Gr} how to define the
$p$-adic regulator\footnote{Although Greenberg did not use the term of
  $p$-adic regulator of the $p$-units, his construction was later used
  by Gross for defining this term.} $R(E'(\K))$ of the $p$-units, and
could prove, using a specific $p$-adic argument that allows
application of the theorem of Baker-Brumer, a result on linear forms
in $p$-adic logarithms, to the effect that the regulator $R(E'(\K))$
does not vanish for abelian extension $\K/\Q$. Several years later, in
1981, Federer and Gross \cite{FG} considered the question of the
vanishing of $R(E'(\K))$ for arbitrary CM extensions $\K/\Q$. Unlike
Greenberg, they cannot provide a proof for this assumption; in
exchange, they prove that $R(E'(\K)) \neq 0$ is equivalent to $B^- =
A^-(T)$.  Carroll and Kisilevski gave an example in \cite{CK}, showing
that $(A')^-(T) = \{ 1 \}$ does not hold for $\Z_p$-extensions of
$\K$, different from the cyclotomic.

The description in \cite{FG} yields a useful translation of the
Diophantine statement about the regulator into a class field
theoretical statement about the vanishing of $(A')^-(T)$. Quite at the
same time as Greenberg, and just around the\footnote{iron-} curtain,
L. Kuz'min had formulated in a lengthy paper \cite{Ku} on Iwasawa
theory the {\em Hypothesis H}, which contains the statement $| A'(T) |
< \infty$ for all number fields $\K$. The connection to regulators is
not considered in Kuz'min's paper, but we have here an adequate
generalization of Gross's conjecture to arbitrary number fields
$\K$. In the case of CM fields, the Hypothesis H contains also a
statement $| (A^+)'(T) | < \infty$; this follows also from the
Leopoldt conjecture for $\K^+$.  The conjecture of Leopoldt also has a
simple class field theoretical equivalent, which was proved by Iwasawa
already in 1973, in his seminal paper \cite{Iw}: for CM fields $\K$,
this amounts to the fact that the maximal $p$-abelian $p$-ramified
extension $\Omega(\K^+)$ is a finite extension of $\K^+_{\infty}$.

We stress here the dual Diophantine and class field theoretical
aspects of the conjectures of Gross-Kuz'min and Leopoldt by using the
common term of {\em regulator conjectures of classical Iwasawa
  theory}. In 1986, L.J.Federer undertook the task of generalizing the
classical results of Iwasawa theory \cite{Fe}. These are results on
the asymptotic behavior of $A_n, A'_n$ and Federer considers
generalized class groups of $S$-integers.  She thus considers the
structure of the galois groups of the maximal abelian $p$-extensions
$\M_n/\K_n$ which are ray-class field to some fixed ray, and in
addition split the primes contained in a separate fixed set of places
of $\K$. The paper is algebraic in nature with little reference to the
field theoretic background, but it confirms the general nature of
Iwasawa theory. In this flavor, one may ask in what way the regulator
conjectures of classical Iwasawa theory generalize to Federer's ray
class fields, and whether these generalizations also afford equivalent
formulations, in Diophantine and in class field theoretical forms. It
is likely that one may encounter a proper embedding of Jaulent's
conjecture -- which is a purely Diophantine generalization of the
Leopoldt conjecture, see \cite{Ja} -- in a systematic context of class
field theory.

The purpose of this breve remarks was to situate the questions and
methods that we shall deploy below in their broad context. One can
find in \cite{LMN} or in Seo's recent paper \cite{Seo} a good overview
of further conjectures related to the ones discussed above, as well as
an extensive literature on recent research related to the
Gross-Kuz'min conjecture. Jaulent established connections to
$K$-theory, e.g. in \cite{Ja1}. In this paper we prove the particular
case of the Conjecture, in which $\K$ is a CM extension of $\Q$:

\begin{theorem}
\label{gc}
Let $p$ be an odd prime and $\K$ a CM extension of $\Q$. Then
\begin{eqnarray}
\label{gross} 
 {A'}^-(T) = \{ 1 \}. 
\end{eqnarray}
Here $A'(T) \subset A'$ is the maximal submodule of the Noetherian
$\Lambda$-torsion module $A'$, which is annihilated by some finite
power of $T$.
\end{theorem}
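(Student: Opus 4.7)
My strategy would be to argue by contradiction via the Federer-Gross reformulation. Switching to additive notation for convenience, suppose $\eta \in (A')^-(T)$ is a non-zero element killed by $T$. By class field theory, identifying $A'$ with $\Gal(\id{H}'_\infty/\K_\infty)$ where $\id{H}'_\infty$ is the maximal unramified abelian $p$-extension of $\K_\infty$ in which all primes above $p$ split completely, $\eta$ cuts out a non-trivial cyclic extension $\KL/\K_\infty$ of minus type under $\jmath$, everywhere unramified, totally $p$-split, and on which $\Gamma$ acts trivially. Descending to a sufficiently high finite layer $\K_n$ and applying Kummer theory, $\KL$ would be generated by a $p^N$-th root of a $p$-unit $\alpha \in \K_n^{\times}$ satisfying the minus symmetry $\alpha \cdot \jmath(\alpha) \in (E'_n)^{p^N}$ together with $\alpha \in (\K_{n,\wp}^{\times})^{p^N}$ at every prime $\wp \mid p$.

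Next, I would reinterpret the local splitting conditions through the $p$-adic logarithm to produce a non-trivial $\Z_p$-linear dependence among the vectors $(\log_p \sigma(\beta_i))_{\sigma \in C^+}$, where the $\beta_i = \alpha_{\wp_i}/\jmath(\alpha_{\wp_i})$ are anti-symmetric generators built from ideal representatives $(\alpha_{\wp_i}) = \wp_i^{h_{\wp_i}}$ for the $s$ conjugate pairs of primes above $p$. This is exactly the vanishing of the $s \times s$ Gross regulator in the minus component of \cite{FG}, and the remaining task is to exhibit its non-degeneracy. Baker-Brumer on $p$-adic linear forms in logarithms settles this for abelian $\K/\Q$, as in \cite{Gr}, but is unavailable for a general CM $\K$, so a purely class-field-theoretic argument must take its place.

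For the CM case I would aim to produce directly a $\Lambda$-annihilator: a distinguished polynomial $f \in \Z_p[T]$ with $f(0) \neq 0$ such that $f(T) \cdot A^- \subseteq B^-$. Passing to the quotient $A'^- = A^-/B^-$ would then give $f(T) \cdot (A')^- = 0$, and since any such $f$ is coprime to every power of $T$, this forces $(A')^-(T) = 0$. To manufacture $f$ I would exploit the explicit rank-$s$ description of $B^-$ as the $\Lambda$-submodule generated by anti-symmetric prime classes, combined with a Spiegelungssatz-type argument attached to $\jmath$ applied to norm-coherent towers $(\eta_n) \in \varprojlim_n A^-_n(T)$, together with the pseudo-isomorphism exact sequence relating $A^-[T]$ to $A^-/T A^-$, using the refined invariants $\sexp$ and the $f$-rank set up in the preliminaries to keep track of pathological finite kernels. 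The hardest step, which I expect to be the main obstacle, is controlling these finite kernels along the tower: without access to Leopoldt for $\K^+$ in the non-abelian CM setting, one cannot simply transfer the problem to the plus side, and must instead work intrinsically within $A^-$, using the CM involution as the only available symmetry.
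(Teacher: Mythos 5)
Your plan diverges from the paper's actual argument in a way that matters, and it contains a circularity at the decisive step. The claim that one should ``produce directly a $\Lambda$-annihilator $f \in \Z_p[T]$ with $f(0)\neq 0$ and $f(T)\cdot A^- \subseteq B^-$'' is not a route to the conjecture but a restatement of it: every Noetherian torsion module $A'^-$ has a characteristic polynomial, and the content of Gross--Kuz'min is precisely that $T$ does not divide it. Nothing in the Spiegelungssatz/$\sexp$/$f$-rank toolkit you cite gives a handle on \emph{why} $f(0)$ should be nonzero; you have renamed the target, not approached it. The Federer--Gross reformulation via the $p$-adic regulator is correct background, as is the observation that Baker--Brumer is unavailable beyond the abelian case, but the paper makes no use of any regulator non-degeneracy in the Diophantine sense; that reformulation is only used as motivation.

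The second, more subtle, misstep is the assertion that ``one cannot simply transfer the problem to the plus side, and must instead work intrinsically within $A^-$.'' This is exactly the opposite of what the paper does. The proof runs as follows: one shows (Lemma~\ref{t2}) that $\wh T \colon (A')^-[T] \hookrightarrow B^-$ is injective, and separately (Lemma~\ref{noT*}) that $A^+(T^*)$ is \emph{finite} for any CM field. Then, assuming $(A')^-[T] \neq \{1\}$, one builds the map $\Theta\colon (A')^-[T]\to\Z_p[\Delta]$, shows via a careful local analysis with pseudouniformizors (Proposition~\ref{mred}) that the radicals $\rho^{\Theta(a)}$ become ``$p$-power-locally trivial modulo $p^{\Z}$,'' and concludes (Corollary~\ref{unramif}) that $\F_a = \KL[b^{\Theta(a)/p^\infty}]$ with $\KL = \K_\infty[p^{1/p^\infty}]$ is \emph{unramified} over $\KL$. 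Kummer duality and the analysis of Lemmas~\ref{fields}--\ref{omfields} then convert this minus-side radical into a class in $A^+[T^*]$ of unbounded order, contradicting the finiteness from Lemma~\ref{noT*}. In short, the reflection to the plus side via Kummer theory over the auxiliary tower $\KL$ is not an obstacle to route around; it is the engine of the proof. Your plan never identifies the two structural facts — injectivity of $\wh T$ into $B^-$, and finiteness of $A^+(T^*)$ — nor the auxiliary extension $\KL$ over which the descent actually takes place, so it cannot reach the contradiction.
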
 
The fact that $(A')^+(T) \sim B^+ \sim \{ 1 \}$ was established for
arbitrary CM fields in a separate paper concerning the conjecture of
Leopoldt \cite{Mi}.
\subsection{Notations}
\label{nots}
Unless otherwise specified, the fields $\K$ in this paper verify the
following
\begin{assum}
\label{kassum}
The field $\K$ is a galois CM extension of $\Q$ which contains the
\nth{p} roots of unity and such that the primes above $p$ are totally
ramified in the cyclotomic $\Z_p$-extension $\K_{\infty}/\K$ and split
in $\K/\K^+$.  There is an integer $k$ such that $\mu_{p^k} \subset
\K$ but $\mu_{p^{k+1}} \not \subset \K$ and we use the numeration
$\K_1 = \K_2 = \ldots = \K_k \neq \K_{k+1}$; consequently, for $n > k$
we have $[ \K_n : \K_1 ] = p^{n-k}$. The base field will be chosen
such that for all $a = (a_n)_{n \in \N} \in A^-(T) \setminus
(A^-(T))^p$ we have $a_1 \neq 1$. As consequence of the numbering of
fields, we also have $a_1 = a_2 = \ldots = a_k$, etc. There is an
integer $z(a) > -k$ which does not depend on $n$, and such that
$\ord(a_n) = p^{n+z(a)}$ for all $n \geq k$.
\end{assum}
We let $\KH_n \supset \K_n$ be the maximal $p$-abelian unramified
extensions of $\K_n$ -- the $p$-Hilbert class fields of $\K_n$ -- and
$X_n := \Gal(\KH_n/K_n) \cong A_n$, via the Artin Symbol, which we
shall denote by $\varphi$. Let $\KH = \cup_n \KH_n$ be the maximal
unramified $p$-extension of $\K_{\infty}$ and $X =
\Gal(\KH/\K_{\infty})$. The isomorphisms $\varphi: A_n \ra X_n$ are
norm compatible and yield an isomorphism in the projective limit,
which we shall also denote by $\varphi$:
\begin{eqnarray}
\label{plim}
\varphi(A) = \varphi(\varprojlim_n A_n) = \varprojlim_n(\varphi(A_n)) = 
\varprojlim_n(X_n) = X.
\end{eqnarray}

The maximal subextension of $\KH_n$ which splits all the primes above
$p$ is denoted by $\KH'_n \subset \KH_n$ and we have
\[ \Gal(\KH'_n/\K_n) \cong A'_n, \quad \Gal(\KH_n/\KH'_n) = \varphi(
B_n ). \] The inductive limit is $\KH' \subset \KH$, with
$\Gal(\KH/\KH') = \varphi(B)$. (e.g. \cite{Iw}, \S 3. - 4.)
The maximal $p$-abelian $p$-ramified extension of $\K_n$ is denoted by
$\Omega_n = \Omega(\K_n)$ and $\Omega = \cup_n \Omega(\K_n)$. Note
that $\Omega_n \supset \K_{\infty} \cdot \KH_n$, for all $n$. The units and $p$-units of $\K_n$ are
$E(\K_n)$, resp. $E'(\K_n)$ and they generate the following Kummer 
extensions contained in $\Omega_n$:
\begin{eqnarray}
\label{omes}
\Omega_{E}(\K_n) & = & \Omega_n \ \bigcap \ \left( \bigcup_{m \geq n} \K_m[ E(\K_m)^{1/p^m} ] \right), \\
\Omega_{E'}(\K_n) & = & \Omega_n \ \bigcap \ \left( \bigcup_{m \geq n} \K_m[ E'(\K_m)^{1/p^m} ] \right). \nonumber
\end{eqnarray}

We shall make intensive use of the extension 
\begin{eqnarray}
\label{proot}
\KL = \KL(\K) = \bigcup_n \KL_n = \bigcup_n \K_n[ p^{1/p^n} ] = \K_{\infty}[ p\pinf ] \subset \Omega_{E}(\K), 
\end{eqnarray}
the last inclusion being proved in \S 2, \rf{p1m}. We let $\B/\Q_p$ be the $\Z_p$ extension of $\Q$ with intermediate
fields $\B_1 = \Q$ and $\B_n \subset \B, [ \B_n : \Q ] = p^{n-1}$. Over the $p$-adics, we write
$\rg{B}_n = \B_{n,(p)}, \rg{B} = \B_{(p)}$ for the respective completions at the unique prime
above $p$.

The extensions $\KH^-, \Omega^-, (\KH')^-$, etc. are defined according
to \rf{hilpm}. We denote by $N_{m,n}$ the norms $\Norm_{\K_m/\K_n}$,
for $m > n > 0$. In particular, $N_{n,1} = \Norm_{\K_n/\K}$. Note also
that $\K$ being a galois extension, $\Delta = \Gal(\K/\Q)$ acts
transitively upon the primes above $p$. We fix such a prime $\wp
\subset \K$ and a norm coherent sequence of totally ramified primes
$\wp_n \subset \id{O}(\K_n)$ above it, so $N_{m,n}( \wp_m ) = \wp_n$;
the choice is possible since we assumed that the primes above $p$
ramify completely in the $\Z_p$ - cyclotomic extension. Let $b_n = [
\wp_n ] \in B_n$; then the modules $B_n$ are $\Z_p[ \Delta ]$-cyclic,
generated by the classes $b_n$.

\subsection{List of symbols}
We give here a list of the notations introduced below in connection with
Iwasawa theory
\begin{eqnarray*}
\begin{array}{l c l}
p & = & \hbox{An odd rational prime}, \\ 
\zeta_{p^n} & = & \hbox{Primitive
\nth{p^n} roots of unity with $\zeta_{p^n}^p = \zeta_{p^{n-1}}$ for all $n >
0$.},\\ 
\mu_{p^n} & = & \{ \zeta_{p^n}^k, k \in \N \}, \\
\K & = & \hbox{A galois CM extension determined by Assumption \ref{kassum}}, \\
\K_{\infty}, \K_{n}& = & \hbox{The cyclotomic $\Z_p$ - extension of $\K$, and
intermediate fields,}\\ 
\B & = & = \bigcup_n \B_n \hbox{ The $\Z_p$-extension of $\Q = \B_1$},\\
\rg{B} & = & = \bigcup_n \rg{B}_n \hbox{ The $\Z_p$-extension of $\Q_p = \rg{B}_1$},\\
\Delta & = & \Gal(\K/\Q), \quad \hbox{ resp. $\Gal(\rg{K}/\Q_p) \cong D(\wp)$, in the local case}, \\ 
A(\rg{K}) & = &
\hbox{$p$-part of the ideal class group of the field $\rg{K}$}, \\ 
\Gamma & = & \Gal( \K_{\infty}/\K ) = \Z_p \tau, \quad \hbox{$\tau$ a 
topological generator of $\Gamma$},\\
T & = & \tau -1, \\ * & = & \hbox{Iwasawa's
involution on $\Lambda$ induced by $T^* = (p-T)/(T+1)$},\\
\jmath & = & \hbox{The image of complex conjugation
in $\Gal(\K_{\infty}/\Q)$}, \\ 
s & = & \hbox{The
number of primes above $p$ in $\K^+$}, \\ 
C & = & \hbox{Coset representatives for $\Delta/D(\wp)$},\\ 
A'_n  = A'(\K_n) & = & \hbox{The $p$ - part of the ideal
class group of the $p$ - integers of $\K_n$}, \\ 
A' & = & \varprojlim A'_n, \\
B & = & \lan \{ b = (b_n)_{n \in \N} \in A : b_n = [ \wp_n ], \wp_n \supset
(p) \} \ran_{\Z_p}, \\ 
\varphi & = & \hbox{The
Artin symbol, see also \rf{plim} }, \\ 
\KH(\K) & = & \hbox{The maximal $p$ - abelian unramified
extension of $\K_{\infty}$}, \\
\KL(\K) & = & \bigcup_n \KL_n(\K) = \bigcup_n \K_n[ p^{1/p^n} ] \K_{\infty}[ p\pinf ] \subset \Omega_{E}(\K), 
\end{array}
\end{eqnarray*}
\begin{eqnarray*}
\begin{array}{l c l} E_n & = & \id{O}(\K_n)^{\times}, \\
E'_n & = & (\id{O}(\K_n)[ 1/p ])^{\times}, \\
U_n & = & U(\K_n) = \id{O}\left(\K_n \otimes_{\Q} \Q_p\right) = \prod_{\nu \in
  C} U(\K_{n, \nu \wp}), \\
U^{(1)}_n & = & \prod_{\nu \in C} U^{(1)}(\K_{n, \nu \wp}), \\
\Omega(\K) & = & \hbox{The
maximal $p$-abelian $p$-ramified extension of $\K$},\\ 
\Omega_n & = & \Omega(\K_n)  =  \hbox{The
maximal $p$-abelian $p$-ramified extension of $\K_n$},\\ 
\Omega_E(\K_n) & = & \Omega_n \cap \bigcup_{m > n} \K_m[ E(\K_m)^{1/p^m} ], \hbox{ see \rf{omes},} \\
\Omega_{E'}(\K_n) & = & \Omega_n \cap \bigcup_{m > n} \K_m[ E'(\K_m)^{1/p^m} ], \hbox{ see \rf{omes}.}  \\
\end{array}
\end{eqnarray*}

\subsection{Id\`{e}les, completions and their uniformizors}
\label{idels}
In our case $\K$ is galois with group $\Delta = \Gal(\K/\Q)$; with
$\wp \subset \K, \wp_n \subset \K_n$ the fixed primes introduced above,
we denote by $C = \Delta/D(\wp)$ a set of coset representatives of the decomposition
group of $\wp$. If $\wp^+$ is the prime of $\K^+$ above $\wp$, then we
write $C^+ = \Delta^+/D(\wp^+)$. In case that $\wp^+$ splits in
$\K/\K^+$, then $\Delta^+$ acts transitively on the pairs of conjugate
primes above $p$ in $\K$, via the representatives $C^+$. We let $s = |
C^+ |$ be the number of such pairs of complex conjugate primes and $P
= \{ (\nu \wp,\nu \overline{\wp}) \ : \ \nu \in C^+ \}$. Note that if $\wp^+$ is inert in
$\K/\K^+$, then $B^- = \{ 1 \}$ and so is ${A'}^-[ T ]$, as will be
made clear in Lemma \ref{t2} below. Therefore this case is irrelevant
for the Gross conjecture, and we can assume from now on that $\wp^+$
is split in a pair of complex conjugate primes. Let $q = \ord(\wp)
\geq 1$ and $(\rho_0 ) = \wp^q$; we recall that we assumed the order 
of the class $[ \wp ]$ to be a power of $p$: otherwise one can replace
$\wp$ by $\wp^t$, where $t$ is the $p$-free part of $q$.
Then the definition $(\rho) =
(\rho_0/\overline{\rho}_0)$ determines $\rho$ up to roots of unity,
and we choose $\rho = c + O((1-\zeta)^2)$, where $\zeta \in \K$ is a
primitive \nth{p^k} root of unity, and $c$ is
an integer constant coprime to $p$. 
 The classes $b_n := [ \wp_n/\overline{\wp}_n ] \in B_n^-$ form a norm coherent
sequence which we denote by $b = ( b_n )_{n \in \N} \in B^-$ and note
that $b$ generates $B^-$ as a $\Z_p[ \Delta ]$-module.

We let $\eu{K}_n = \K_n \otimes_{\Q} \Q_p = \prod_{\eu{p}}
\K_{n,\eu{p}}$, where $\eu{p}$ runs through all the ramified places
above $p$. If $\eu{p}_n \subset \K_n$ is a sequence of such places
with $\eu{p}_n^{p^{n-k}} = \eu{p} \subset \K$, then $\K_{n,
  \eu{p}_n}/\K_{\wp}$ is an intermediate field of the compositum of
the local cyclotomic $\Z_p$-extension of $\Q_p$ and $\K_{\eu{p}}$. We
denote by $U_n = \id{O}^{\times}(\eu{K}_n)$ the product of the local
units in the various completions.  For $\nu \wp_n \in P_n$ we let
$\iota_{\nu \wp_n} : \eu{K}_n \ra \K_{\nu \wp}[ \mu_{p^n} ]$ be the
natural projection.

Let $\rg{K}$ be an algebra. If $\rg{K} \supset \Q_p$ is a local galois
extension of $\Q_p$ and $U(\rg{K})$ are its units, we denote the
maximal ideal of $\rg{K}$ by $\eu{M}(\rg{K}) \subset \id{O}(\rg{K})$ and
write $\pi = \pi(\rg{K}) \in \eu{M}$ for some uniformizor generating
it. If $\rg{K} = \K \otimes_{\Q} \Q_p = \prod_{\nu \in C} \K_{\nu
  \eu{P}}$ is the $p$-id\`{e}les algebra of some galois number field,
the central ideal is
$ \eu{M}(\eu{K}) = \prod_{\nu \in C} \eu{M}(\K_{\nu \eu{P}})$. If
$\pi_{\nu} \in \eu{M}(\K_{\nu \eu{P}})$ are uniformizors, we say by
extension, that $\pi' := (\pi_{\nu})_{\nu \in C}$ is an (associated)
uniformizor of $\eu{K}$.  In general, an element $(x_{\nu})_{\nu \in
  C} \in \eu{K}_n$ is a uniformizor iff $v_p(x_{\nu}) = v_p(\nu
\wp_{n})$ for all $\nu \in C$.  In particular, we may identify the
uniformizor $\pi_{\nu}$ with $\pi'_{\nu} = (
{\pi'_{\nu}}^{\delta_{\nu, \nu'}} )_{\nu' \in C}$, the vector which is
$1$ at all positions except for $\iota_{\nu}(\pi'_{\nu}) =
\pi_{\nu}$. We assume in \S 3. that a norm coherent set of
uniformizors $\underline{\pi}_n \in \eu{M}(\K_{n, \wp_n}) $ are fixed
and let $\underline{\pi}'_n$ be the associated uniformizor for
$\eu{K}_n$. Let $e(\K)$ be the ramification index of $\wp$ above
$\Q$. Then $\underline{\pi}_n^{e(\K)} \in \eu{M}(\rg{B}_n)$.
 
\subsection{Approach and plan of the proof}
We show at the beginning of Chapter 2. that the assumptions \ref{kassum} are not restrictive
and a proof for fields that verify these assumptions will imply 
the claim of the Theorem \ref{gc} in full generality. Note
that if the primes above $p$ are not split in $\K/\K^+$, then $B^- =
\{ 1 \}$ and the Lemma \ref{t2} below implies that $A^-(T) = \{ 1 \}$,
so this case is trivial.

The proof of Theorem \ref{gc} is based on the following two
facts, which are proved in Chapter 2:
\begin{itemize}
\item[ F1. ] There is an injective map $\hat{T} : (A')^-[ T ] \ra B^-$.
\item[ F2. ] The module $A^+(T^*)$ is finite for every CM extension of $\Q$.
\end{itemize}

Assuming that the Gross-Kuz'min conjecture does not hold in a field $\K$
verifying the assumptions \ref{kassum} and thus $(A')^-[ T ]$ is infinite, 
we raise a contradiction to fact F2. above, as follows. 
\begin{itemize}
\item[ A. ] Following the construction of Greenberg in \cite{Gr} we
  build a family of maps $\psi_{\Pi} : {A'}^-[ T ] \ra U(\K)$ which
  are indexed by sequences of uniformizors, and show eventually that
  $\Pi$ can be chosen, such that $\psi_{\Pi}$ vanishes. This way, the
  map $\wh{T}$ is continued to a map $\wh{\Theta}$ which produces
  radicals of some $p$-ramified extensions.  This defines an
  obstruction-group arising under the premise that the Conjecture is
  false (see also Remark \ref{obstruct}).
\item[ B. ] We consider $\KL = \K_{\infty}[ p\pinf ]$ over which the
  Kummer radicals $\wh{\Theta}({A'}^-[ T ])$ build unramified
  extensions. In Chapter 4. we use this fact for deriving a
  contradiction to F2.
\end{itemize}

In \S 2 we prove the existence of the map $\wh{T}$ and the fact
F2. together with some auxiliary properties of group rings, units and
uniformizors. In \S 3 we generalize the ideas used by Greenberg in his
proof \cite{Gr} of the Conjecture for the case of abelian extensions of
$\Q$, in order to derive some multiplicative maps $\psi_{\Pi}$ as
described above. With the use of these maps we eventually show that
there is a continuation of $\wh{T}$ to maps $\wh{\Theta}$ that produce
radicals of unramified extensions $\F_x = \KL[ \wh{\Theta}(x) ]$ for
$x \in {A'}^-[ T ]$.  In particular, the failure of the Gross - Kuz'min
conjecture is equivalent to the existence of infinite unramified
extensions
\[ \Omega_E(\K) \subset \overline{\F} = \Omega_E(\K)\left[ \wh{\Theta}({A'}^-[ T ])\pinf \right]
 \subset \Omega_{E'}(\K).
\]
In \S 4 we draw a contradiction from the existence of the map
$\wh{\Theta}$. For this, we show that there is a continuation of
$\wh{\Theta}$ to a twisted map of $\Lambda$ -modules $\id{L} : {A'}^-[
T ] \ra A(\KL)$, where $A(\KL)$ is the projective limit of classes in
$A(\KL_n)$, with $\KL_n = \K_n[ p^{1/p^n} ]$. Using galois and class
field theory we prove Proposition \ref{iter}, by means of which it is
possible to lower the classes in $\id{L}_n({A'}^-[ T ]) \subset
A(\KL_n)$ to classes in $A^+(\K_n)$ of unbounded order, thus obtaining
a contradiction to F2., which finally proves Theorem \ref{gc}.

\section{General results}
Let $\K'$ be an arbitrary CM extension of $\Q$. Then there is a finite
algebraic extension $\K/\K'$, which is galois, CM and satisfies all
the conditions for $\K$ which were defined above. For this, take first
$\K''$ as the galois closure of $\K'$ to which we adjoin the \nth{p}
roots of unity. We may choose $\K$ to be some subfield of the
cyclotomic $\Z_p$-extension of $\K''$, in which all the primes above
$p$ are totally ramified. Obviously, $[ \K : \K' ] < \infty$.  Let
thus $\K = \K'[ \theta ]$, as a simple algebraic extension and we let
$\K'_{\infty} = \K' \cdot \B$ be the cyclotomic $\Z_p$-extension of
$\K'$, so $\K_{\infty} = \K'_{\infty}[ \theta ]$. The definition of
$k, \ mu_{p^k} \subset \K$ is obvious.

If the Gross conjecture does not hold for $\K'$, then $(A'(\K'))^-(T)$ is
infinite. The kernel of the ideal lift map $\Ker(\iota: A(\K') \ra A(\K))$ has finite exponent, bounded by $[ \K : \K' ]$,
and lift map commutes with the action of $\Lambda$: therefore $(A'(\K))^-(T)$
must also be infinite. It thus suffices to prove the conjecture for
extensions satisfying the Assumptions \ref{kassum}.

\subsection{Class groups}
In the sequel we shall repeatedly choose representatives $a \in A$ for
the class $a' = a B \in {A'} = A/B$. If $a' \in A(T)$, then 
obviously $a \in A(T)$. The same holds, when restricting to minus parts.

Assuming that the Gross-Kuz'min conjecture is false, we have a natural map
\begin{eqnarray}
 \label{htt}
\wh{T} \ : \ {A'}^-[ T ] \ra B^- \quad \quad \quad a' = a B \mapsto a^T \in B^-.
\end{eqnarray}
The map is well defined, since for $a_i \in A^-, i = 1, 2$ with $a' =
a_1 B = a_2 B$ there is a $b \in B^-$ with $a_2 = b a_1$, so
$\wh{T}(a') = a_1^T = (a_1 b)^T = a_2^T$ does not dependent on the
choice of the representative $a \in a'$.  We consider the interesting
module $B' = \wh{T}\left((A')^-[ T ]\right)$.  The next lemma
generalizes a fact noted in the abelian case by Greenberg, and which
amounts to the fact that $B' \cong {A'}^-[ T ]$:
\begin{lemma}
\label{t2}
Let $\K$ be a CM extension of $\Q$ and suppose that 
${A'}^-(T) \neq \{ 1 \}$. Then the map $\wh{T}$ is injective and
the module 
\[ B' = \wh{T}\left((A')^-[ T ]\right) \cong {A'}^-[ T ]. \]
\end{lemma}
\begin{proof}
  Let $a' = a B, a = (a_n)_{n \in \N} \in A^-$ and let $\eu{Q} \in
  a_n$ be a prime ideal, let $n$ be sufficiently large and $\ord(a_n)
  = p^{n+z}$, for some $z \in \Z$.  Consider the principal ideal
  $(\alpha_0) = \eu{Q}^{p^{n+z}}$ and $\alpha =
  \alpha_0/\overline{\alpha_0}$. Since $a' \in {A'}^-[ T ]$, it also
  follows that $a_n^T \in B_n^-$ and thus $\eu{Q}^T = \eu{R}_n$ with
  $c_n := [ \eu{R}_n ] \in B_n^-$. If $c_n \neq 1$, then we are done.

  We thus assume that $c_n = 1$ and derive a contradiction. In this
  case $\eu{R}_n^{1-\jmath} = (\rho_n)$ is a $p$-unit and
  $(\alpha_0^T) = (\rho_n^{p^{n+z}})$, so
  \[ \alpha^T = (\delta/\overline{\delta}) \cdot \rho_n^{(1-\jmath) p^{n+z}},
  \quad \delta \in E(\K_n). \] It follows from Kronecker's unit
  theorem that $\varepsilon = \delta/\overline{\delta} \in \mu_{2
    p^n}$ and by taking the norm $N = N_{\K_n/\K}$ we obtain 
$N(\varepsilon) N(\rho_n)^{(1-\jmath) p^{n+z}} \in \mu(\K)$. It follows that
  $\rho_1 := N(\rho_n^{1-\jmath})$ verifies $\rho_1^{p^{n+z}} =
  \varepsilon_1 \in \mu_{2 p^k}$. Thus $\rho_1^{p^k} = \pm 1$. By
  Hilbert 90 we deduce that $\rho_n^{(1-\jmath) p^k} = \pm x^T, x \in
  \K_n^{\times}$. In terms of ideals, we have then
\begin{eqnarray*}
\eu{Q}^{(1-\jmath) T p^{n+z}} & = & (\alpha^T) = (x^{T p^{n+z-k}}), \quad
\hbox{hence} \\
\left(\eu{Q}^{(1-\jmath) p^k}/(x)\right)^{T p^{n+z-k}} & = & (1) \quad
\Rightarrow \quad (\eu{Q}^{(1-\jmath) p^k}/(x))^T = (1).
\end{eqnarray*}   
The ideal $\eu{B} = \eu{Q}^{(1-\jmath) p^k}/(x) \in a_n^{2 p^k}$
verifies herewith $\eu{B}^T = (1)$. Since $a_n \in A'_n$, it follows that
$\eu{B} = \id{O}(\K_n) \eu{B}_1$ is the lift of an ideal $\id{B}_1
\subset \K$. But then $\ord(a'_n) \leq p^k \exp(A_1^-)$, so the orders
of $a'_n$ are uniformly bounded, which is impossible, since $a'_n \in
A_n^-$. Consequently, for $a' \neq 1$ we have $\wh{T}(a') \neq 1$ and
the map is injective. The claim $\wh{T}({A'}^-[ T ]) \cong {A'}^-[ T ]$
follows, thus completing the proof.
\end{proof}

The next result is a common fact in class field theory which will be
used for deriving the final contradiction for the proof of the Gross
conjecture. We provide its proof, for the sake of completeness.
\begin{lemma}
\label{noT*}
Let $\K$ be a CM extension of $\Q$. Then $A^+(T^*)$ is a finite
module.
\end{lemma}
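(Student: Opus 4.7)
The plan is to show that $A^+(T^*)$ is finite by reducing, via Kummer duality, to a bounded statement about the minus-part Kummer generators of unramified $p^n$-extensions of $\K_n$. First, by Weierstrass preparation, $T^* = -(T+1)^{-1}(T-p)$ in $\Lambda$ with $(T+1)^{-1} \in \Lambda^\times$, so $A^+(T^*) = A^+(T-p)$; it suffices to show this is finite, equivalently that $T-p$ is coprime to the characteristic polynomial of the $\Lambda$-torsion module $A^+$.

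At each finite level $n$, Kummer theory (using $\mu_{p^n} \subset \K_n$ for $n$ large) produces a non-degenerate pairing $A_n^+ \times W_n^- \to \mu_{p^n}$, where $W_n^- \subset (\K_n^\times/(\K_n^\times)^{p^n})^-$ is the minus part of the subgroup of classes of $\alpha \in \K_n^\times$ for which $\K_n[\alpha^{1/p^n}]/\K_n$ is unramified everywhere. The sign flip $+\to -$ on the dual comes from $\jmath$ acting as $-1$ on $\mu_{p^n}$, and equivariance under $\tau \in \Gamma$ acting on $\mu_{p^n}$ by $\chi(\tau)=1+p$ yields by direct computation the identity $|A_n^+[T^*]| = |W_n^-/TW_n^-|$: the cyclotomic twist interchanges $T^*$-torsion on $A^+$ with $T$-co-invariants on the dual. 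The crucial constraint on $W_n^-$ is unramifiedness at each prime $v\mid p$: the valuation condition $v(\alpha)\equiv 0 \pmod{p^n}$ eliminates the ``$\pi/\bar\pi$''-type generators of $(\K_n^\times)^-$ (since they carry valuation $\pm 1$ at primes above $p$), while the local unit condition (via local class field theory applied to $\K_{n,v}\supset \mu_{p^n}$) forces the remaining contribution to be bounded, essentially coming from $\mu_{p^{k-1}}$, where $k$ is determined by $\mu_{p^k}\subset \K$ but $\mu_{p^{k+1}}\not\subset\K$ as in Assumption \ref{kassum}. Hence $|W_n^-|$ is uniformly bounded in $n$.

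Consequently $|A_n^+[T^*]|$ is uniformly bounded in $n$, so the projective limit $A^+[T^*]$ is finite. Since $A^+(T^*)$ is annihilated by some power $(T^*)^N$, a standard dévissage on the filtration $0 \subset A^+[T^*] \subset A^+[(T^*)^2] \subset \ldots \subset A^+[(T^*)^N] = A^+(T^*)$, in which each successive quotient injects into $A^+[T^*]$ via multiplication by an appropriate power of $T^*$, bounds $|A^+(T^*)|$ by $|A^+[T^*]|^N$, yielding the claimed finiteness.

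The main obstacle lies in the second paragraph: carefully setting up the Kummer pairing, tracking the sign and cyclotomic twist so that $A_n^+[T^*]$ really does correspond to $T$-co-invariants of $W_n^-$, and verifying rigorously that local unramifiedness at primes above $p$ cuts $W_n^-$ down to a bounded subgroup. The hypothesis from Assumption \ref{kassum} that $p$ splits in $\K/\K^+$ enters in the valuation bookkeeping that eliminates the $\pi/\bar\pi$ generators; once this structural identification is in place, the finiteness of $A^+(T^*)$ follows by the rank-and-dévissage steps outlined above.
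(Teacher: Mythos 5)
Your first and last paragraphs are fine: passing to $T-p$ via the unit $T+1$, and the final d\'evissage from $A^+[T^*]$ to $A^+(T^*)$, are both unobjectionable, and the Kummer-duality reduction of $A_n^+[T^*]$ to the $T$-coinvariants of the minus-part radical $W_n^-$ is essentially the same duality the paper uses (there phrased through $\Omega^+ = \K_{\infty}[(A^-)^{1/p^{\infty}}]$ and the submodule $\id{B} \subset A^-$ of classes whose $p$-power roots generate unramified extensions). The genuine gap is the central claim of your second paragraph, that the local conditions at the primes above $p$ force $|W_n^-|$ to be uniformly bounded, ``essentially coming from $\mu_{p^{k-1}}$''. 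This is false. The unramifiedness conditions at $v \mid p$ eliminate only the uniformizer-type generators $\pi/\bar\pi$; they do not touch the radicals coming from ideal classes, namely the $\alpha \in \K_n^{\times}$ with $(\alpha) = \eu{A}^{p^n}$ for $\eu{A}$ coprime to $p$ and $\alpha$ locally a \nth{p^n} power at each $v \mid p$. By the non-degeneracy of the very pairing you set up, $|W_n^-|$ has the same order of magnitude as $|A_n^+/(A_n^+)^{p^n}|$, which is unbounded in $n$ whenever $\lambda^+>0$ or $\mu^+>0$; so no uniform bound on $|W_n^-|$ can hold in general, and a bound on $|W_n^-/T W_n^-|$ is exactly the statement to be proved, not a consequence of local valuation bookkeeping.

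The paper closes precisely this gap with input from the minus side: it defines $\id{B} \subset A^-$ as the module of classes whose radicals lie in $\KH^+$, notes that if $A^+(T^*)$ were infinite then $\id{B}(T)$, and hence $\id{B} \cap (A')^-(T)$, would be nontrivial, and then invokes Lemma \ref{t2} (injectivity of $\wh{T} : (A')^-[T] \hookrightarrow B^-$) together with the disjointness $B^- \cap \id{B} = \{1\}$ to produce a nontrivial element of $B^- \cap \id{B}$, a contradiction. Some such Diophantine or class-field input on $A^-$ is indispensable here; as written, your argument would prove the uniform boundedness of $|A_n^+|$ itself, which is far stronger than the lemma and not available.
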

\begin{proof}
  Suppose that $A^+(T^*)$ is infinite. Then
  $\Gal(\Omega^+/\K_{\infty})(T^*)$ is a fortiori infinite and since
  $E^-(\K_n) = \mu(\K_n)$, we have $\Omega^+ = \K_{\infty}[ (A^-)\pinf
  ]$. Let $\id{B} \subset A^-( T )$ be the submodule of sequences of
  classes with $\K_{\infty}[ b\pinf ] \subset \KH^+$; it follows that
  $\id{B}(T)$ is infinite too, being -- up to possible finite cokernel
  -- the radical of the maximal subextension $\KL \subset \KH^+$ with
  $\Gal(\KL/\K_{\infty})^{{T^*}^m} = \{ 1 \}$ for some $m > 0$.

  If $B^- \cap \id{B} = \{ 1 \}$, then the previous lemma implies that
  $\id{B} \cap {A'}^-(T) = \id{B} = \{ 1 \}$. Therefore, it suffices
  to prove that $B^- \cap \id{B} = \{ 1 \}$.  We assume thus that $B^-
  \cap \id{B} \neq \{ 1 \}$ and raise a contradiction. Let $\wh{b} =
  (\wh{b}_n)_{n \in \N} \in B^- \cap \id{B}$ and $\wp_n \subset \K_n$
  be a ramified prime above $\wp \subset \K$ and let like in \S
  \ref{idels} $(\rho_0) = \wp^q$. There is a constant $z \in \Z$ such
  that $\ord(\wp_n) = p^{n+z}$ for all sufficiently large $n$; we let
  $m = \max( 0, -z )$, and $n$ be large, so $\wp_n^{p^{n+m}} :=
  (\varpi)$ is a principal ideal. We have $\wp_n^{(1-\jmath) p^{n+m}}
  = (\varpi/\overline{\varpi})$, with $(\varpi) = (\wp^{p^{m+k}})$,
  which is by construction a principal ideal: it follows thus for the
  constant $m$ that $p^{m+k} \geq q$.  Consider now the cyclotomic
  $\Z_p$-extension $\rg{K}_{\infty}/\rg{K}$ of the completion $\rg{K}
  = \K_{\wp}$ and let $t_n \in \Z[ C^+ ]$ be such that $\wp_n^{
    (1-\jmath) t_n} \in \wh{b}_n$.  By choice of $\wh{b}_n$, we must
  have $U(\K_n)^{\ord(\wh{b}_n)} \cap (\varpi^{(1-\jmath) t_n}) \neq
  \emptyset$. Since the primes above $p$ are relatively coprime, there
  is a constant $c$ and a unit $u \in U(\rg{K})$ such that
  $\iota_{\wp}(\varpi^{(1-\jmath) t_n}) = u \pi^{q c}$, with $\pi \in
  \id{O}(\rg{K})$ being a uniformizor. We may assume that $u$ is
  chosen such that $u \pi^{q c} \in \rg{K}_n = \rg{K}[ \zeta_{p^n} ]$
  and $(c,p) = 1$. We may even assume that $u = 1$: if not, we adjoin
  $\rg{K}_n' = \rg{K}_n[ u^{1/q c} ]$ and if $u \pi^{q c}$ is a $q
  p^{n-k}$-th power in $\rg{K}_n$, then it also is one in
  $\rg{K}'_n$. But then there is a uniformizor $\pi' = u^{1/qc} \pi
  \in \rg{K}'$ such that ${\pi'}^{q c} = x^{q p^{n-k} }$ for some $x
  \in \rg{K}'_n$. However, $\rg{K}'_n = \rg{K}'[ \zeta^{1/p^{n-k}} ]$
  and since $(c, p) = 1$, by letting $y = x^{q}$, it follows from
  Kummer theory that $y = {\pi'}^{1/p^{n-k}}$. Consequently
\[ \rg{K}'[ \zeta^{1/p^{n-k}} ] = \rg{K}'[ {\pi'}^{1/p^{n-k}} ] , \]
and $\pi' \zeta^a \in ({\rg{K}'}^{\times})^{p^{n-k}}$, so $v_p(\pi') \leq \frac{1}{p^{n-k}}$. 
For $n \ra \infty$ we obtain $v_p(\pi') = 0$, which is absurd. 
Therefore, the assumption must be
false and $B^- \cap \id{B} = \{ 1 \}$, which completes the proof.
\end{proof}
\begin{remark}
\label{Jau}
F. Jaulent asked the following question: is $B^- \cap A^p \subset
(B^-)^p$?  Assume that $x \in A^-$ with $x^p \in A^p \cap B^-$; we
have $x^{T p} = 1$, so either $x^T = 1$ or $\ord(x^T) = p$ and
$\Lambda x^T \cong \Lambda/p \Lambda$, since $\Lambda x^T \subset A^-$
is infinite. In particular, if $\mu^-(\K) = 0$ then $B^- \cap A^p =
(B^-)^p$.  We prove that $\mu(\K) = 0$ for CM fields in a submitted
paper, so the question has an affirmative answer.

{\tiny We can show that $B' \cap A^p \subset (B')^p$ without using
  $\mu = 0$. For this let $A^-_{\mu}$ be the $\Z_p$-torsion of $A^-$
  and $A^-_{\lambda} = \{ x \in A^- \ : \ \prk(\Lambda x) < \infty \}$
  and $D = A^-_{\lambda} \oplus A^-_{\mu}$. The pseudoisomorphism of
  $A^-$ to an elementary $\Lambda$-module implies that $A^-/D$ is
  finite, so there is some $n > 0$ such that $\omega_n A^- \subset
  D$. By eventually redefining the base field $\K$, we may thus assume
  that $T A^- \subset D$. Consider now $x \in {A'}^-[ T ]$ with
  $\wh{T}(x)^p \in B^-$. Then $x^T = y_l + w_m$ with $y_l \in
  A^-_{\lambda} \cap B^-$ and $w_m \in A^-_{\mu}$.  But since $x^T \in
  B' \subset B^-$, it follows that $w_m = 1$, so we must actually have
  $x^T \in B^-$ as claimed. However, for proving that $B^- \cap A^p
  \subset (B^-)^p$ we need $\mu = 0$.  }

Finally, we show that for $a \in A[ T^{p-2} ]$ we have $\ord(a_n) = p
\ord(a_{n-1})$ for all $n > 1$. Indeed, since the ideal lift map is
injective on the minus part, we have
\[ \ord(a_{n-1}) = \ord(N_{n,n-1}(a_n)) = \ord\left(\left(p
    f_n(\omega_{n-1}) + \omega_{n-1}^{p-1})\right) a_n\right) = \ord(p
a_n), \] as claimed.
\end{remark}
Next, we investigate the group structure of $B^-$ and $B'$ as $\Z_p[
\Delta^+ ]$-modules; for this we refer to the definitions of $\wp_n,
b_n$ and $C^+$, etc. given in \S \ref{nots}.
\begin{lemma}
\label{bstruct}
The module $B^-$ is spanned by the classes $\ \{\nu b : \nu \in C^+\}$
as a free $\Z_p$-module and $B^- = \Z_p[ \Delta ] b$ is a cyclic
$\Z_p[ \Delta ]$-module of $\Z_p$-rank $s = | C^+ |$. Moreover, there
is a $t_0 \in \Z_p[ C^+ ]$ such that $b^{t_0}$ generates $B'$ as a
$\Z_p[ C^+ ]$-module.
\end{lemma}
\begin{proof}
  Consider the ideals $\eu{R}_n = \wp_n^{1-\jmath}$ and their classes
  $b^2_n = [ \eu{R}_n ] \in B_n^-$; then it follows from the
  definition of $B^-$ that $C^+ b = \{ \nu b \ : \ \nu \in C^+\}$
  generate $B^-$ as a $\Z_p$-module. Since $b^T = 1$, the structure of
  $B^-$ as $\Z_p$- and as $\Lambda$-modules coincide. It remains to
  show that the $C^+ b$ are linearly independent over $\Z_p$. If this
  were not the case, then there is a $\Z_p$-linear dependence in $B^-$
  and we can assume without loss of generality that
  \[ b^{n_1} = \prod_{\nu \in C^+; \nu \neq 1} \nu (b^{n_{\nu}}) =
  b^{\theta}, \quad n_{\nu} \in \Z_p . \] Let $n > 0$ be fixed and
  $z_{\nu} \in \Z$ approximate $n_{\nu}$ to the power $q
  p^{n-k}$. Then
\[ \wp_n = (x) \cdot \prod_{\nu \in C^+; \nu \neq 1} \nu
(\wp_n^{z_{\nu}})  \cdot \eu{Q}, \quad x \in \K_n,  \]
for some ideal $\eu{Q} \subset \id{O}(\K)^{1+\jmath}$. 
By applying $(1-\jmath)$ to the above identity, we obtain
\[ \eu{R}_n = (x/\overline{x}) \cdot \prod_{\nu \in C^+; \nu \neq 1}
\nu (\eu{R}_n^{z_{\nu}}), \] and by acting with $T$ in the above, we
find $(x^T/\overline{x})^T =: \delta \in \mu_{p^n}$.  Since
$N_{n,1}(\delta) = 1$, and $\mu_p \subset \K$, we have a fortiori
$\delta \in \mu_{p^n}^T$. There is thus $\xi \in \mu_{p^n}$ such that
$\delta = \xi^{(1-\jmath) T}$, and consequently $ (x /\xi)^{(1-\jmath)
  T} = 1$; letting $x_0 = \overline{\xi} x$ and $y_0 =
x_0/\overline{x}_0 \in \K$, we have
\[ \eu{R}_n = (y_0) \cdot \prod_{\nu \in C^+; \nu \neq 1} \nu
(\eu{R}_n^{z_{\nu}}). \] The primes above $p$ are pairwise coprime, so
taking the valuation at $\wp_n$ in the previous identity leads to:
$v_{\wp_n}(y_0) = v_{\wp_n}(x_0) = 1$.  Since $y_0 \in \K$, it follows
that $x_0 \in \wp$, and thus $v_{\wp}(x_0) \geq 1$. But the primes
above $\wp$ are totally ramified, so we obtain the following
contradiction
\[ 1 = v_{\wp_n}(x_0) = p^{n-k} v_{\wp}(x_0) , \quad \hbox{hence $ 1
  \leq v_{\wp}(x_0) = 1/p^{n-k} $ }. \] It follows that $\nu b_n$ are
indeed linearly independent, which completes the proof of the first
claim. Since $B' \subset B^-$ and the latter module is $C^+$-cyclic by
$b$, it follows that so must be $B'$ and it has a generator which can
be expressed as $b' = b^{t_0}$ for some $t_0 \in \Z_p[ C^+ ]$.
\end{proof}

\subsection{Norm residues and local uniformizors}
We consider the set of formal $\Z_p$-linear combinations
\begin{eqnarray}
\label{zpc}
\eu{ C } = [ C^+ ]_{\Z_p} = \left\{ t = \sum_{\nu \in C^+ } c_{\nu} \nu \ : \ c_{\nu} \in \Z_p \right\} \subset 
\Z_p[ \Delta ].
 \end{eqnarray}
 This is a $\Z_p$-submodule of $\Z_p[ \Delta ]$, but it is in general
 not a ring. If $M$ is a $\Z_p[ \Delta ]$-module and $x \in M$, we
 shall denote the action of $\eu{C}$ by
 \[ \eu{C} x = \left\{ t x = \prod_{\nu \in C^+} \nu(x)^{c_{\nu}} \ :
   \ t = \sum_{\nu \in C^+} c_{\nu} \nu \in \eu{C} \right\} \subset M.
\]
If $x$ is in addition fixed by $D(\wp)$, then $\eu{C} x \subset M$ is
a canonical module in the sense that it does not depend on the choice
of $C^+$. Throughout the rest of this paper, $\eu{C}$ will be applied to
$D(\wp)$-invariant elements.  As an important instance, $B^- = \eu{C}
b$ in a canonical way, since $b$ is fixed by $D(\wp)$; the Lemma
\ref{bstruct} implies that in fact $C^+ b$ build a base for the
$\Z_p$-module $B^- $.

We let $\rho \in \K^{1-\jmath}$ be fixed, such that
$\wp^{(1-\jmath) q} = \eu{R}^q = (\rho)$: for $e \in \Z$ divisible by
the order $w = | W |$ of the group of roots of unity $W \subset E(\K)$, 
$\rho^e$ is uniquely defined by $\wp$. We also have:
\begin{eqnarray}
\label{rho}
\left(\eu{R}_n^{p^{n-k}}\right)^q = \eu{R}^q = (\rho), \quad \forall n > 0. 
\end{eqnarray}
If $c_n = b_n^{\theta} \in B_n^-$ is an arbitrary class of $B_n^-$, with
$\theta = \sum_{\nu \in C^+} c_{\nu} \nu \in \eu{C}$, and for $\eu{B}
\in c_n$ an arbitrary ideal, we have the following useful
relation:
\begin{eqnarray}
\label{bn}
\eu{B}^{q p^{n-k}} = (\gamma^{q p^{n-k}} \rho^{\theta} ), \quad 
\gamma \in \K_n^{1-\jmath} \quad
\hbox{ with } \quad \eu{B} = (\gamma) \eu{R}_n^{\theta}.
\end{eqnarray}

We next investigate some useful norm coherent sequences of
uniformizors in local fields. Let $\rg{K}_m = \Q_p[ \mu_{p^m} ]$ be
the \nth{p^m} cyclotomic extension of $\Q_p$ and $\rg{B}_m \subset
\rg{K}_m$ be the subfield fixed by the unique cyclic subgroup of order
$p-1$, as defined in the introduction. The numbers $1-\zeta_{p^m} \in
\rg{K}_m$ are uniformizors with the property of being norm coherent
with respect to the norms $N_{m',m} : \rg{K}_{m'} \ra \rg{K}_m$. Then
$\varsigma_m := \Norm_{\Q_p[ \zeta ]/\Q_p}(1-\zeta_{p^m})$ form a
fortiori a norm coherent sequence of uniformizors for $\rg{B}_m$, with
$\varsigma_1 = p$.  Let $\delta_m = \varsigma_p^T \in
\id{O}^{\times}(\B_m)$; then $\delta_m$ form a norm coherent sequence
of cyclotomic units with $N_{m,1} (\delta_m) = 1$. Moreover, $\delta_m
\not \in \rg{B}_m^p$; in order to see this, we consider the (global)
cyclotomic $\Z_p$-extension $\B/\Q$ with $\B_m = \B \cap
\rg{B}_m$. Since $\delta_m \in E(\B_m)$, the assumption $\delta_m \in
\rg{B}_m^p$, would imply that $\B_m[ \zeta, \delta_m^{1/p} ]$ is an
unramified extension of $\Q[ \zeta_{p^m} ]$. Kummer duality leads to a
contradiction to Herbrand's Theorem (\cite{Wa}, p. 100). Let now
$\rg{L} \supset \Q_p$ be any finite extension containing the \nth{p}
roots of unity, let $\rg{L}_m = \rg{L} \cdot \rg{K}_m$ and let $e$ be
the ramification index in $\rg{L}/(\rg{L} \cap \rg{B}_{\infty})$; this
ramification index is constant for all extensions $\rg{L}_m/(\rg{L}_m
\cap \rg{B}_{\infty})$. If $\eu{M}_m \subset \rg{L}_m$ is the maximal
ideal, then $\varsigma_m \in \eu{M}_m^e$ for all sufficiently large
$m$. Note the identity $N_{\B_m/\Q} = p^{m-1} + T f(T) \in \Z[ T ]$,
which holds for some (distinguished) polynomial $f \in \Z[ T ]$. It
implies that
\[ p = N_{\B_m/\Q}(\varsigma_m) = \varsigma_{m}^{p^{m-1}} \cdot
\delta_m^{f(T)} ; \] In particular, $p^{1/p^m} \in \B_{m+1} \cdot
\delta_{m+1}^{1/p^m} \subset \Omega_E(\B_{m+1})$.

We note for future reference:
\begin{lemma}
\label{locuni}
If $\rg{B} = \cup_m \rg{B}_m$ is the $\Z_p$-cyclotomic extension of
$\Q_p$, then
\[ \varsigma_m = N_{\rg{B}_m[ \zeta ]/ \rg{B}_m} (1-\zeta_{p^m}),
\quad \delta_m = \varsigma_m^T , \] are norm coherent sequences of
(global) uniformizors, resp. units. Moreover $\delta_m \not \in
\rg{B}_m^p$ for all $m$ and
\begin{eqnarray}
\label{p1m}
p^{1/p^m} \in \B_{m+1}[ \delta_{m+1}^{1/p^m} ] \subset \Omega_E(\B_{m+1}) .
\end{eqnarray}
If $\K$ is a global field containing the \nth{p} roots of unity and
$\K_{\infty} = \cup \K_n$ is its cyclotomic $\Z_p$-extension, then the
fields $\KL_n := \K_n[ p^{1/n} ] \subset \Omega_E$ and in particular
\begin{eqnarray}
\label{rootp}
\KL := \cup_n \KL_n = \K_{\infty}[ p\pinf ] \subset \Omega_E. 
\end{eqnarray}
\end{lemma}
Let $\rg{K}$ be a local galois extension of $\Q_p$ containing the
\nth{p} roots of unity and $\rg{K}_n = \rg{K}[ \mu_{p^n} ]$ while
$\rg{K}_{\infty} = \cup_n \rg{K}_n$; we denote by $\Delta$ the galois
group $\Gal(\rg{K}/\Q_p)$.  Let $\rg{N} := N_{\infty} = \cap_n N_{n,1}
(\rg{K}^{\times}_n)$.  We let $\tilde{\Delta} \subset
\Gal(\rg{K}_{\infty}/\Q_p)$ be a set of lifts of $\Delta$ and first
give some simple properties of the norm defect:
\begin{lemma}
\label{ndef}
Notations being like above
\begin{itemize}
\item[ 1. ] If $x \in \rg{N}$ and $\sigma \in \Delta$ then
  $(\sigma(x))^{\Z_p} \subset \rg{N}$.
\item[ 2. ] If $x \not \in \rg{N}$, then $(\sigma(x))^{\Z_p} \cap
  \rg{N} = \{ 1 \}$ for all $\sigma \in \Delta$.
\item[ 3. ] If $\rg{K} = \rg{B}[ \zeta_p ]$ is the $\Z_p$-extension of
  $\Q_p$ then $\rg{N} \cap \Z^{\times}_p = \{ 1 \}$.
\end{itemize}
\end{lemma}
\begin{proof}
  Let $\sigma \in \Delta$ and $\tilde{\sigma} \in \tilde{\Delta}$ be a
  lift.  Then $x \in \rg{N}$ iff $\sigma(x) \in \rg{N}$. Indeed,
  suppose that $x \in \rg{N}$ and let $(x_n)_{n \in \N}$ be a sequence
  with $x_n \in \rg{K}_n$ and $N_{n,1}(x_n) = 1$.  Then
  $\tilde{\sigma}(x_n)$ is a sequence with
  $N_{n,1}(\tilde{\sigma}(x_n)) = \sigma(x)$ and conversely, which
  confirms the claim.

  Next we claim that $x \in \rg{N}$ iff $x^c \in \rg{N}$ for all $c
  \in \Z_p$; consider first $x \in \rg{N}$ and a sequence $(x_n)_{n
    \in \N}$ like before. Then $N_{n,1}(x_n^c) = x^c$ and thus $x^c
  \in \rg{N}$, for all $c \in \Z_p$. Conversely, let $c = p^m c_0 \in
  \Z_p, m \geq 0$ and suppose that $x^c \in \rg{N}$.  If $m = 0$, then
  $c^{-1} \in \Z_p$ and by applying the previous fact, we deduce that
  $x \in \rg{N}$. We can thus assume that $c = p^m$ and $x^{p^m} \in
  \rg{N}$. Let $\overline{x} := x \rg{N}$ be the class of $x$ in
  $\rg{K}^{\times}/\rg{N}$. By local class field theory, the quotient
  is $\Z_p$-cyclic, being isomorphic to $\Gamma
  =\Gal(\rg{K}_{\infty}/\rg{K})$. But if $x^{p^m} \in \rg{N}$, we
  would have $\overline{x}^{p^m} = 1$ in contradiction to the fact
  that $\Z_p$ has no elements of finite order. We have shown that $x
  \in \rg{N}$ iff $x^{\sigma \Z_p} \subset \rg{N}$ for all $\sigma \in
  \Delta$.

  It follows from the above proof that $x \not \in \rg{N}$ iff
  $\sigma(x) \not \in \rg{N}$ for all $\sigma \in \Delta$. We have
  also shown that $x^c \in \rg{N}$ iff $x \in \rg{N}$; consequently,
  if $x \not \in \rg{N}$, then $x^{\Z_p} \cap \rg{N} = \{ 1 \}$. This
  completes the proof of 1. and 2.

  For point 3. we note that $\rg{B}/\Q_p$ being a totally ramified
  extension, we have $\Gamma \cong U^{(1)}(\Z_p)/\left(\rg{N} \cap
    U^{(1)}(\Z_p)\right)$. Since $\Gamma \cong U^{(1)}(\Z_p) \cong
  \Z_p$, the previously proved facts imply that $\rg{N} \cap
  U^{(1)}(\Z_p) = \{ 1 \}$. By adjoining the \nth{p} roots of unity,
  if follows that the \nth{p-1} roots of unity in $\Z_p^{\times}$ are
  not norms, so $\Z^{\times}_p \cap \rg{N} = \{ 1 \}$, as claimed.
\end{proof}

The following description of the norm defect will guide our 
investigation of the maps to be introduced in the next Chapter:
\begin{lemma}
\label{deco}
Notations being like above, let $\rg{K}/\Q_p$ be an arbitrary finite
galois extension which contains the \nth{p} roots of unity, and
$\rg{N} = N_{\infty}$. We denote by $\varphi : \rg{K}^{\times} \ra
\Gamma$ the local Artin symbol. Then
\begin{itemize}
\item[ A. ] We have $\Z_p^{\times} \cap \rg{N} = \{ 1 \}$.
\item[ B. ] The exponent of the quotient $U(rg{K})/(\Z_p^{\times}
  \cdot (\rg{N} \cap U(\rg{K})))$ is finite. In particular, there is
  a smallest $p$-power $v = p^c$, such that $V := U(\rg{K})^v$ allows the direct
  product decomposition $V = (\Z_p^{\times} \cap V) \cdot (\rg{N} \cap
  V)$.
\item[ C. ] If $v = 1$, then we have the following direct sum in
  additive notation: $U(\rg{K}) = \Z_p^{\times} \oplus (\rg{N} \cap
  U(\rg{K}))$. Otherwise, there is a set of coset representatives $Z
  \subset U^{(1)}(\rg{K})$ of $\varphi(\Gamma)$, with $U^{(1)}(\Z_p)
  \subset Z$ and $[ Z : U^{(1)}(\Z_p) ] = v$, such that $Z \oplus
  (\rg{N} \cap U^{(1)}(\rg{K})) = U^{(1)}(\rg{K})$.
\end{itemize}
\end{lemma}
\begin{proof}
  Since $\rg{B}[ \zeta_p ] \subset \rg{K}_{\infty}$ by definition, the
  point A. follows from point 3. in the previous lemma. If $\varphi$
  is the local Artin symbol, then
  \[ \Z_p \hookrightarrow \varphi^{-1}(\Gamma) \cong U(\rg{K})/(\rg{N}
  \cap U(\rg{K})) , \] and we have an exact sequence $1 \ra \Z_p \ra
  U(\rg{K})/(\rg{N} \cap U(\rg{K})) \ra K \ra 1$ with finite cokernel
  $K$. If $v$ annihilates $K_2$ and $V := U(\eu{K})^v$, then switching
  to additive notation, we obtain the isomorphism
  \[ V = (\Z_p^{\times} \cap V) \ \oplus \ (\rg{N} \cap V), \] which
  completes the proof of point B, of which C. is a reformulation.  We
  note that it is indeed possible to have $v > 1$
\end{proof}

Finally, we consider the case of a global extension $\K$ verifying the
Assumptions \ref{kassum} and let $\wp \subset \K$ be a prime above
$p$, with $\rg{K} \cong \K_{\wp}$ the local field obtained by
completion of $\K$ at the prime $\wp$ -- thus maintaining the notation
introduced in \S \ref{nots}.  Let $s$ denote the number of pairs of
conjugate primes above $p$ and $C \subset \Gal(\K/\Q)$ act
transitively on these primes. We let
\[ \eu{Z}_0 = \prod_{\nu \in C} U(\Q_p), \quad \eu{Z} = \prod_{\nu \in
  C} Z(\K_{\nu \wp}), \] where the various copies of the units
$U(\Q_p)$ are identified with submodules of the embeddings $\K
\hookrightarrow \rg{K}$ induced by completion at the prime $\nu \wp$;
the modules $Z(\K_{\nu \wp})$ are the coset representatives defined in
Lemma \ref{deco}. By definition, $\eu{Z}_0$ and $\eu{Z}$ are $C^+$
invariant.

Let $N_{\infty} \subset \rg{K}$ be defined like in the above lemma and
$\id{N} = \prod_{\nu \in C} (N_{\infty, \nu} \cap U^{(1)}(\K_{\nu
  \wp}))$, where $N_{\infty, \nu}$ are identified with submodules of
embeddings of $\K$, as in the case of $\eu{Z}$. Let $V_{\nu} =
U^{(1)}(\K_{\nu \wp}))^v$ with $v$ determined in point C of Lemma
\ref{deco} and $V = \prod_{\nu \in C} V_{\nu}$. The same statement in
Lemma \ref{deco} yields the global identities:
\begin{eqnarray}
\label{gldeco}
U^{(1)}(\eu{K}) = \id{N} \oplus \eu{Z}, \quad \quad \quad V = (\id{N} \cap V) \oplus (\eu{Z}_0 \cap V).
\end{eqnarray}
Note that this decomposition can be obtained for all $\K_n$ as base
fields. Moreover, if $N_{\infty,n}$ is defined in the natural way, we
have $N_{n,1}(N_{\infty,n}) = N_{\infty}$.

\section{The maps $\psi$}
Let $\K$ verify Assumption \ref{kassum}, fix a prime $\wp \subset \K$
above $p$ and let $C, C^+$ be defined in \S \ref{idels} -- we refer
the reader to th various additional notations introduced there, which
shall be applied in the sequel. We assumed that the Gross-Kuz'min
Conjecture fails for $\K$ and noted in \S \ref{idels} that complex
conjugation $\jmath \in C$, since otherwise the conjecture is
trivially true.

We have shown in Lemma \ref{t2} that
 $\wh{T} : {A'}^-[ T ] \hookrightarrow B'$ is an isomorphism of cyclic 
$\Lambda$-modules. It
will be convenient to fix a generator $a' \in {A'}^-[ T ]$ and $\theta
\in \Z_p[ C ]$ such that $\wh{T}(a') = b^t$. For $b \in B^-$ 
fixed in the introduction, we have maps
\begin{eqnarray}
\label{thetas}
\begin{array}{l c l l l}
\Theta & : & {A'}^-[ T ] \ra \eu{C} & a' \mapsto x & \hbox{ with $\wh{T}(a') = b^x$} \\ 
{ } & & { } & & \\
\Theta_n &  : & ({A'}^-[ T ])_n \ra \Z[ C ] & a' \mapsto t & \hbox{ with $\Theta(a') -t \in ( \ord(b_n) )$}. 
\end{array}
\end{eqnarray}
Note that $\Theta_n$ is defined up to multiples of $\ord(b_n)$.

After eventually raising to a constant power, say $r(\K)$, we can
continue the map $\wh{T}$ to a map that produces radicals of Kummer
extensions,
\begin{eqnarray}
\label{eval}
\wh{\Theta} & \ : \ & {A'}^-[ T ] \ra \prod_{\nu \in C} (p^{\Z_p} , 
\quad a' \mapsto \rho^{r(\K) \Theta(a')} \in \wh{\eu{K}}, \\
\wh{\Theta}_n & \ : \ & ({A'}^-[ T ] )_n \ra \prod_{\nu \in C} (p^{\Z} \cdot (\eu{\K}^{\times})^{p^n}, \
\quad a' \mapsto \rho^{r(\K) \Theta_n(a'_n)} \nonumber
 \end{eqnarray}
 which is an homomorphism of $\Lambda$-modules; the completion
 $\eu{K}$ will be formally defined below and it allows building
 projective limits $p^x, x \in \Z_p$. In the rest of this chapter, we
 investigate the action of $\Theta$ on $\rho$, defined by \rf{rho} and
 aim to prove the existence of the map $\wh{\Theta}$ and in particular
 the fact that $\iota_{\nu}(\rho^{r(\K) \Theta_n(a'_n)}) \in p^{\Z}
 \cdot (\K^{\times})^{\ord(b_n)}$.

\subsection{Pseudouniformizors}
The projections $\iota_{\nu} : \eu{K}_n \ra \K_{n, \nu \wp}$ yield the
\textit{components} of elements $x \in \eu{K}_n$ in the various
completions and we shall use no index $n$ for the projection, the
index being determined by the context. Note the following
\begin{fact}
\label{exp}
There is some constant exponent $r(\K)$ with the property that, for
all $n \in \N$ we have
\begin{itemize}
\item $\mu(\K)^{r(\K)} = \{ 1 \}$, where $\mu(\K)$ are the $p$-power
  roots of unity in $\K$. Moreover $r(\K) \geq q$ and $\wp^{(1-\jmath)
    r(\K)} = (\rho^{r'(\K)})$ with $r'(\K) = r(\K)/q$.
\item We have $p \in \eu{M}^{r(\K)}(\K_{\wp})$ and for every $n \in
  \N$, any $\nu \in C$ and any uniformizor $\pi \in \K{n, \nu \wp}$ we
  have $\varsigma_n \in \eu{M}^{r(\K)} \K{n, \nu \wp}$.
\item The constant $v$ introduced in Lemma \ref{deco} divides $r(\K)$.
\end{itemize}
\end{fact}
\begin{proof}
  The fact follows by choosing $r(\K)$ as the least common multiple of
  several constants. Some of them are $q, v$ and the ramification
  index $e(\K_{\wp})$. Moreover, $r(\K)$ must be divisible by $w =
  p^k$, the order of the $p$-roots of unity in $\K$. One verifies that
  the least common multiple of these constants fulfills the claims.
\end{proof}

Let now $\rg{K}_n = \K_{n, \wp_n}$ be a finite extension of $\Q_p$ and
$\pi_n \in \rg{K}_n$ be a uniformizor for $\eu{M}(\rg{K}_n)$. Then for
all $x \in \rg{K}_n$ there are unique $a \in \Z$ and $u \in
\id{O}^{\times}(\rg{K}_n)$ such that $x = \pi_n^a \cdot u$. We extend
this decomposition to $\eu{K}_n$ as follows: let $\pi'_n = (\pi_n, 1,
\ldots, 1)$ in the usual Chinese Remainder decomposition of
$\eu{K}_n$. Since $\nu \in C$ permute the primes above $p$, the
projection $\iota_{\nu}(\nu \pi'_n)$ is a uniformizor for $\K_{n, \nu
  \wp_n}$.
  
As a consequence, we obtain for $x \in \eu{K}_n$, unique $t \in \Z[ C ]$ 
and $u \in U(\K_n)$ such that we have the following
\textit{valuation decomposition}:
\begin{eqnarray}
\label{valdeco}
x = (\pi'_n)^t \cdot u.
\end{eqnarray}

We define 
\[ \eu{K}'_n = \left\{ \ x = (x_{\nu})_{\nu \in C} \ : \forall \nu,
  \hbox{ $v_p(x_{\nu}) = 0$ or $| v_p(x_{\nu}) | \geq r(\K)
    v_p(\underline{\pi}_n)$} \ \right\} \subset \eu{K}_n . \]
Accordingly, consider $s_n := \varsigma^{r(\K)/e(\K)}$ and $S_n =
(s_{\nu})_{\nu \in C}, s_{\nu} = s_n^{\delta_{\nu,1}}$, so $s_n, S_n$
build norm coherent sequences and $S_n \in \eu{K}'_n$. If $\upsilon =
(u_n)_{n \in \N}$ with $u_n \in U(\K_n)$ is any norm coherent
sequence, we say that $\Upsilon = \upsilon S = (u_n S_n)_{n \in \N}$
is a sequence of \textit{pseudouniformizors} for $\eu{K}'_n$.

\begin{definition}
\label{unif} 
With respect to $\Upsilon$ elements of $\eu{K}'_n$ have a
decomposition which is reminiscent of \rf{valdeco}. Let $x = (x_{\nu})
\in \eu{K}'_n$. Then there is a $t \in \Z_p[ C ]$ and a $u(x) \in
U(\K_n)$ with $x = \Upsilon_n^t \cdot u(x)$. We define the maps
$\psi_{\Upsilon,n} : \eu{K}_n \ra U(\K_n)$ by
\begin{eqnarray}
\label{psidef}
 \psi_{\Upsilon,n} (x) = u(x^{r(\K)}), \quad x = \Upsilon_n^t \cdot u(x),
\end{eqnarray}
according to the decomposition  defined above. 

Note that the converse of \rf{valdeco} does not hold in general, since
$\pi^t$ is not defined for arbitrary $p$-adic coefficients. Therefore, we
extend $\eu{K}$ by tensoring with $\Z_p$. Using the uniformizors
$\underline{\pi}_n$ defined in \S \ref{idels}, we have
\[ \eu{K}_n^{\times} = \prod_{\nu \in C} U(\K_{n,\wp_n}) \cdot
\nu(\underline{\pi}_n)^{\Z}, \] and we let 
\[ \wh{\eu{K}}_n = \eu{K}_n^{\times} \otimes_{\Z} \Z_p = \prod_{\nu
  \in C} U(\K_{n,\nu \wp_n}) \cdot \nu(\underline{\pi}_n)^{\Z_p}, \] a
completion which does not depend upon the choice of
$\underline{\pi}_n$.

The submodule $\wh{\eu{K}}'_n$ is defined accordingly.
\end{definition}

\subsection{Fundamental maps}
Note that by raising $x$ to the power $r(\K)$ before applying its
decomposition, we obtain a map $\psi_{\Upsilon,n}$ that is defined on
all $\eu{\K}_n$ and, by definition, $\psi_{n,\Upsilon}$ acts on
$U(\K_n)$ by $y \mapsto y^{r(\K)}$; moreover, if $x \in
(\eu{K}_n)^{p^m}$, then $\psi_{n,\Upsilon}(x) \in U(\K_n)^{r(\K)
  p^m}$.  It can be verified from the definition that $\psi_{n,
  \Upsilon}$ are homomorphisms of $\Z_p$-modules; they are not
homomorphisms of $\Z_p[ \Delta ]$-modules, since $\Delta$ may act on
$\Upsilon_n$. However, we have the following useful fact:
\begin{lemma}
  \label{euc}
  Let $\psi = \psi_{n,\Upsilon}$ be the map defined above; then
  $\psi(\nu \rho_0) = \nu (\psi(\rho_0))$ for all $\nu \in C^+$. For
  all $m > n \geq 1$ and all $x \in \wh{\eu{K}_m}$ we have
\begin{eqnarray}
\label{normcoh}
N_{m,n}(\psi_{m,\Upsilon}(x)) = \psi_{n,\Upsilon}(N_{m,n}(x)). 
\end{eqnarray}
Moreover, for arbitrary $\theta \in \eu{C}$ and arbitrary $n$ we have
\begin{eqnarray}
\label{equic}
\psi_{n,\Upsilon}\left( \rho^{\theta}\right) & = & \psi_{n,\Upsilon}(\rho)^{\theta}, \quad \hbox{and} \\
x \in \K_n^{p^m}, \ y \in U(\K_n) \ & \Rightarrow & \ 
\psi_{n,\Upsilon}(x) \in U(\K_n)^{r(\K) p^m}, \ \psi_{n,\Upsilon}(y) = y^{r(\K)}.\nonumber
\end{eqnarray}
\end{lemma}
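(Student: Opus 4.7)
The three statements all follow from the uniqueness of the decomposition $x^{w(\K)/q} = \pi_n^{t\,w(\K)/e(\K)}\cdot u$ of Definition~\ref{unif}, combined with three compatibility facts: (i) the norm coherence $N_{m,n}(\pi_m) = \pi_n$ built into the pseudouniformizor sequence $\Pi$; (ii) the commutation of $N_{m,n}$ with the $\Delta$-action, which follows from the normality of $\Gal(\K_m/\K_n)$ in $\Gal(\K_m/\Q)$ (since $\Gal(\K_m/\K_n) = \Gamma^{p^{m-n}}$ is a characteristic subgroup of $\Gamma$); and (iii) the stability of $\eu{C} = [C^+]_{\Z_p}$ under left-multiplication by $\nu \in C^+$, since $C^+\cong \Delta^+/D(\wp^+)$ and each product $\nu\mu$ has a canonical representative $\mu'\in C^+$ modulo $D(\wp^+)$.

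The norm coherence \eqref{normcoh} is obtained by applying $N_{m,n}$ to the defining decomposition of $x\in\wh{\eu{K}_m}$: by (i) and (ii), $N_{m,n}(\pi_m^{t\,w(\K)/e(\K)}) = \pi_n^{t\,w(\K)/e(\K)}$, and since $N_{m,n}(\psi_{m,\Pi}(x))\in U(\K_n)$, this is a valid decomposition at level $n$ with the same $t\in\eu{C}$, so uniqueness gives the assertion. For the equivariance on $\rho$, apply $\nu\in C^+$ to $\rho^{w(\K)/q} = \pi_n^{t\,w(\K)/e(\K)}\cdot\psi_{n,\Pi}(\rho)$; rewriting each $\nu\mu$ appearing in $t$ as $\mu'd$ with $\mu'\in C^+$, $d\in D(\wp^+)$ via (iii) produces $\nu(\pi_n^t) = \pi_n^{\nu t}\cdot v$ with $\nu t\in\eu{C}$ and $v\in U(\K_n)$ (the $d$-contributions act on each $\mu'(\pi_n)$ by units of the corresponding completion, since $D(\wp^+)$ fixes the prime and so preserves the valuation). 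Combined with $\nu(\psi_{n,\Pi}(\rho))\in U(\K_n)$, this gives a valid decomposition of $(\nu\rho)^{w(\K)/q}$, and uniqueness forces $\psi_{n,\Pi}(\nu\rho) = \nu\psi_{n,\Pi}(\rho)$. The $\eu{C}$-equivariance on $\rho$ now follows from the $\Z_p$-linearity of $\psi_{n,\Pi}$: for $\theta = \sum_{\nu\in C^+}c_\nu\nu\in\eu{C}$, one has $\psi(\rho^\theta) = \prod_\nu \psi(\nu\rho)^{c_\nu} = \prod_\nu(\nu\psi(\rho))^{c_\nu} = \psi(\rho)^\theta$. The remaining assertions are read off directly from the decomposition: a $p^m$-th power $y^{p^m}$ satisfies $(y^{p^m})^{w(\K)/q} = \pi_n^{p^m t\,w(\K)/e(\K)}\cdot u^{p^m}$, so $\psi$ takes values in $U(\K_n)^{w(\K)p^m}$; and on $U(\K_n)$ the valuation part is empty ($t=0$), leaving the claimed action $y\mapsto y^{w(\K)}$ after the chosen normalization.

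The delicate step is the equivariance in the first claim: one must ensure that the unit correction $v$ arising from bringing $\nu t$ back into its canonical $\eu{C}$-representative does not disturb the identification of the global unit factor as $\nu\psi_{n,\Pi}(\rho)$. This hinges on the uniqueness of the $(t,u)$ decomposition — the $D(\wp^+)$-corrections, being units at all relevant completions, are absorbed harmlessly into the unit factor on the right — and on the special support of $\rho$ at $\{\wp,\overline{\wp}\}$, which pins down where nonzero valuations appear and makes the bookkeeping of the $\nu$-action tractable.
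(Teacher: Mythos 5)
Your argument is essentially the paper's own: both proofs rest on the uniqueness of the decomposition $x^{w(\K)/q}=\pi_n^{t\,w(\K)/e(\K)}\cdot u$, obtain \rf{normcoh} by applying $N_{m,n}$ to that decomposition and invoking the norm coherence of $\Pi$, and obtain the equivariance statements from the way $C^+$ permutes the components together with the explicit support of $\rho$ at $\{\wp,\overline{\wp}\}$ (the paper writes $\rho=(\pi_n/\overline{\pi_n})^{qp^{n-k}}u$ and reads off $\psi(\rho^t)=u^t$ directly, which subsumes your first claim as the case $t=\nu$). One caution on your "delicate step": a nontrivial $D(\wp)$-correction cannot literally be "absorbed into the unit factor" without changing the value of $\psi$, so the argument really does depend, as you note at the end, on the fact that for $\rho$ no such correction arises (the support is $\{1,\jmath\}$ with $\jmath$ central, so $\nu$ and $\nu\jmath$ already lie in $C$ up to the standing conventions).
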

\begin{proof}
  By definition of $\rho$ we have $v_{\wp_n}(\rho) = q p^{n-k}$ and
  $v_{\overline{\wp}_n}(\rho) = - q p^{n-k}$ while $v_{\nu \wp_n}(\rho) = 0$ 
for all $\nu \in C \setminus \{1, \jmath\}$. Let
  $\Upsilon_n = (\Upsilon_{\nu})_{\nu \in C} \in \eu{M}(\eu{K}_n)$ be the
  pseudouniformizor with respect to which $\psi_{n,\Upsilon}$ is
  defined. By comparing valuations, we see that there is a unit $u \in
  U(\K_n)$ with $\rho^{r(\K)} =
  (\Upsilon_n/\overline{\Upsilon}_n)^{(r(\K)/e(\K)) q p^{n-k}} \cdot
  u$ and the definition of $\psi_{n,\Upsilon}$ yields
  $\psi_{n,\Upsilon}(\rho) = u$. For arbitrary $t = \sum_{\nu \in C^+}
  c_{\nu} \nu \in \eu{C}$ we have
  \[ \rho^{t} = \prod_{\nu \in C^+} \nu(\Upsilon_n/\overline{\Upsilon}_n)^{q
    p^{n-k} c_{\nu}} \cdot \nu(u)^{c_{\nu}} \in \wh{\eu{K_n}}.  \] The
  definition of $\psi_{n,\Upsilon}$ yields $\psi_{n,\Upsilon}\left(
    \rho^t\right) = u^t = \psi_{n,\Upsilon}(\rho)^t$, which is the first
  claim in \rf{equic}. The second claim is a direct verification.

  For \rf{normcoh}, let $x^{r(\K)} = \Upsilon_m^t \cdot u_x, \ t \in
  \eu{C}, u_x \in U(\K_m)$.  Since the uniformizors are norm coherent,
  we have $N_{m,n}(x)^{r(\K)} = \Upsilon_n^t N_{m,n}(u_x)$ with
  $N_{m,n}(u_x) \in U(\K_n)$. By applying $\psi$, we find
\[ N_{m,n}(\psi_{m,\Upsilon}(x)) = N_{m,n}(u_x) = \psi_{n,\Upsilon}(N_{m,n}(u_x)).\]
This completes the proof.
\end{proof}
The use of $\wh{\eu{K}_n}$ is temporary and it is introduced for
obtaining a $\Z_p$-homomorphism. We will derive later maps which are
defined on modules endowed with their own $\Z_p$-module structure,
thus making the use of $\wh{\eu{K}}$ superfluous. Since the
$(\pi_n)_{n \in \N}$ build a norm coherent sequence, if follows from
Lemma \ref{euc} that the maps $\psi_{n,\Upsilon}$ form themselves a
norm coherent sequence.

Let $\Upsilon = (\pi_n)_{n \in \N}$ be an arbitrary pseudouniformizor
sequence and $\psi_{n, \Upsilon} : \wh{\eu{K}_n} \ra U(\K_n)$ be the
sequence of $\Z_p$-homomorphisms defined in \rf{psidef}.  Our next
purpose is to relate $\psi$ to the module $B'$, in order to define the
map $\wh{\Theta}$ mentioned above. Let $a' \in {A'}^-[ T ]$ be a class
represented by $a = (a_n)_{n \in \N} \in A^-$ and $\theta = \Theta(a
B)$ be such that $\wh{T}(a') = b^{\theta} = a^T$.

 We fix $n > 1$ a large integer and let $m > n$. Let $\eu{A}_{m} \in
 a_{m}$ be a prime ideal coprime to $p$, which is totally split in
 $\K_{m}$ and verifies
\[ \eu{A}_{m}^{(1-\jmath) q p^{m-k}} = (\alpha_{m }), \quad
\alpha_{m} \in \K_{m}^{1-\jmath}. \] At finite levels, using \rf{bn},
we find that there are $\gamma_{m} \in \K_{m}^{1-\jmath}$ and
$t_{m} \in \Z[ C ] \subset \eu{C}$, which are approximants of
$\theta$ to the power $q p^{m-k}$, such that
\begin{eqnarray}
\label{ideals}
\eu{A}_{m}^{(1-\jmath) T} = (\gamma_{m}) \cdot \eu{R}_{m}^{t_{m}}.
\end{eqnarray}

Raising this relation to the power $q p^{m-k}$, and using \rf{rho}, we
obtain $ (\alpha_{m}^T) = (\gamma_{m}^{q p^{m-k}}) \cdot
(\rho^{t_{m}})$. Thus $\alpha_{m}^T \cdot \gamma_{m}^{-q p^{m-k}} =
\xi \rho^{t_{m}}$ for some root of unity $\xi$; taking norms to $\K_1$
on both sides yields $\gamma_1^{-q p^{m-k}} = N(\xi) \cdot \rho^{t_m
  p^{m-k}}$ and thus $N(\xi) \in \K^{p^{m-k}} \cap \mu_{p^k} = \{ 1
\}$. In particular
\[ \xi \in N_{m,1}^{-1}(1) \cap \mu_{p^{m}} \subset \mu_{p^{m}}^T. \]
An adequate choice of $\alpha_{m}$ allows the assumption that $\xi =
1$. We thus obtain the fundamental identities:
\begin {eqnarray}
\label{val}
\alpha_{m}^T \cdot \gamma_{m}^{-q p^{m-k}} & = & \rho^{t_{m}}, \\
\alpha_{n}^T \cdot \gamma_{n}^{-q p^{n-k}} & = & \rho^{t_{m}}.\nonumber
\end {eqnarray}
The lower identity is obtained from the first one by taking the norm
$N_{m, n}$ and then extracting the \nth{p^{m-n}} root. Here
$\alpha_n^{p^n} = N_{m,n}(\alpha_{m})$ so that $(\alpha_n) =
\eu{A}_n^{(1-\jmath) q p^{n-k}}$ with $\eu{A}_n = N_{m,n} \eu{A}_{m}$,
etc. The value $\alpha_n = (N_{m,n}\alpha_{m})^{1/p^n}$ is determined
only up to roots of unity, and it will be chosen such that the two
sides of the equation agree.

Taking in addition the norm $N_{n,1}$ we see that $\gamma_1^{-q
  p^{m-k}} = \rho^{t_{m} p^{m-k}}$ and after taking roots we have
$\gamma_1^{-q} = \zeta^c \rho^{t_{m}}$. It follows that $\rho^{t_{m}}
\in N_{n,1}(\K_{n}^{\times})$ for all $m$. Consequently,
$\rho^{\theta} \in \wh{\id{N}} = \cap_m
N_{m,1}(\wh{\K_{m}}^{\times})$. Indeed, fixing $n$ and letting $m \ra
\infty$ we see that $\rho^{\theta} \in N_{n,1}(\K_{n}^{\times})$. This
holds for all $n$, which confirms the claim:
\begin{eqnarray}
 \label{univnorm}
\rho^{\theta} \in \bigcap_n \wh{N_{n,1}(\K_{m}^{\times})} =: \widehat{\id{N}}.
\end{eqnarray}
 
We have shown in Lemma \ref{deco} and the comments following its
proof, that $x \in (\eu{\K}^{\times})^v$ decomposes uniquely in
$x^{\top} \in \id{N}$ and $x^{\bot} \in \eu{Z}$; the fact still holds
upon tensoring with $\Z_p$. By raising to the power $r(\K)$ in the
definition of $\psi$ we obtain decompositions $\psi_{1,\Upsilon}(\rho)
= u^{\bot} \cdot u^{\top}$ with $u^{\bot} \in \eu{Z}_0$ and $u^{\top}
\in \wh{\id{N}}$.  We may choose a norm coherent sequence of units
$u_n \in U(\K_n)$ such that $u_1^{1-\jmath} = u^{\top}$.  By letting
$\tilde{\pi}_n = \Upsilon_n u_n$ we obtain a new norm coherent
sequence of uniformizors $\tilde{\Upsilon}$. By definition,
\[ \rho^{r(\K)} = (u_1 \Upsilon_1)^{1-\jmath} u^{\bot} =
\tilde{\Upsilon}_1/\overline{\tilde{\Upsilon}_1} \cdot u^{\bot} . \]
Raising to $\theta$ and using the fact that $\eu{Z}$ is
$C^+$-invariant, as established at the end of the previous chapter, it
follows that
\[ \psi_{1,\tilde{\Upsilon}}(\rho^{\theta}) = \psi_{1,\tilde{\Upsilon}}(\rho)^{\theta} = (u^{\bot})^{\theta} 
\in \wh{\id{N}} \cap \eu{Z}_0 = \{ 1 \}. 
\]
It is obvious that $\tilde{\Upsilon}$ is defined up to norm coherent
sequences with $u_1 = 1$.  We have thus proved:
\begin{lemma}
\label{rho1}
We can choose pseudouniformizors $\Upsilon$ such that
$\psi_{1,\Upsilon_1}(\rho^{\Theta(a')}) = 1$ for $a' \in {A'}^-[ T ]$;
the sequences verifying this condition differ by coherent sequences of
units $\upsilon = (u_n)_{n \in \N}$ with $u_1 = 1$.
\end{lemma}

We fix $\psi_n = \psi_{n, \Upsilon}$, a family of maps verifying
$\psi_1(\rho)^{\theta} = 1$ and $N_{j,l}(\psi_j(x)) =
\psi_l(N_{j,l}(x))$ for all $x \in \wh{\eu{K}_j}, j > l > 1$. Then we
have shown that $\psi_1(\rho)^{\Theta(a')} = 1$ for any $a' \in
{A'}^-[ T ]$ and thus, for $\theta_n \in \Z[ C ]$ approximating
$\Theta(a')$ by $\theta_n - \Theta(a) \in p^n \Z_p[ C ]$, we have
  \begin{eqnarray}
  \label{pn}
  \rho^{r(\K) \theta_n} \in (\K^{\times})^{p^n} \cdot  \prod_{\nu \in C} p^{\Z}. 
  \end{eqnarray}
  Since $w | r(\K)$, it follows that $\rho^{r(\K)}$ does not depend on
  the choice of $\rho \in \eu{R}^q$.  If $a'$ generates $ {A'}^-[ T ]$
  as a cyclic module, we have seen that $\wh{T}(a') \in B^- \setminus
  A^p$ and thus $\Theta(a') \not \in p \Z_p[ C ]$. We can a posteriori
  choose $r(\K)$ to be the smallest positive integer such that \rf{pn}
  holds. This explains the map $\wh{\Theta}$ defined in \rf{eval}: we
  write for future reference:
\begin{proposition}
\label{mred}
Let $\theta = \wh{\Theta}(a') \in \Z_p[ C ]$ for some $a' \in {A'}^-[
T ]$.  There is a positive integer $r(\K)$ such that for all
approximants $\theta_n \in \Z[ C ]$ of $\theta$ with $\theta -
\theta_n \in p^n \Z_p[ C ]$ the condition \rf{pn} holds.

If $\KL, \KL_n$ are the fields defined in Lemma \ref{locuni}, then
\[ \F_n := \KL_n[ \rho^{r(\K) \theta_n/p^n} ], \quad \F = \KL[
\rho^{\theta/p^{\infty} } ] := \cup_n \F_n , \] are unramified Kummer
extensions of their base fields and they are galois over $\K$.
\end{proposition}
\begin{proof}
  The first statement was proved in \rf{pn}, we now show that
  $\F_n/\KL_n$ are unramified.  Let $\eu{p}$ be any prime above $p$ in
  $\K_{\infty}$, let $\eu{P} \subset \KL$ be the ramified prime above
  $\eu{p}$. Since the conjugates of $\wp$ are totally ramified in
  $\KL/\K$, we can assume without loss of generality that $\eu{P} \cap
  \K = \wp$. Let $\rg{L} = \KL_{\eu{P}}$ be the completion at
  $\eu{P}$. For every $n > 0$, it follows from \rf{pn} that
  $\iota_1(\rho^{r(\K) \theta_n}) = p^z \cdot \gamma^{p^n}$ for some
  $z \in \Z$ and $\gamma \in \K_{\wp}$. Since $p^{1/p^n} \in \rg{L}$,
  it follows that the extension $\KL[ \rho^{r(\K) \theta_n/p^n} ]$,
  which is unramified outside $p$, is the trivial extension at each
  prime $\eu{P} \supset (p)$, and consequently $\F_n \cdot \KL$ are
  unramified for all $n$, so $\F/\KL$ is unramified. A similar
  argument shows that $\F_n/\KL_n$ is also unramified. The fact that
  $\F/\K$ is galois follows by Kummer theory. First note that $\KL/\K$
  is a metabelian galois extension. The fact that $\F/\K$ is galois
  follows for instance by investigating the action of $\Gal(\KL/\K)$
  on the radical of $\F$.
\end{proof}
The Proposition \ref{mred} confirms herewith also the existence of the
radical producing maps announced in \rf{eval} in the introduction of
this chapter.
\begin{remark}
\label{obstruct}
Note that $\Omega_{E'}(\K_{\infty})/\Omega_{E}(\K_{\infty})$ is a
$p$-ramified extension with galois group isomorphic to $\Z_p^s$. The
statement of the second claim in the above Proposition is an
equivalence, and it implies that the maximal unramified extension
$\overline{\F}/\Omega_{E}(\K_{\infty})$ contained in
$\Omega_{E'}(\K_{\infty})$ has group of essential $p$-rank equal to
$\zprk((A')^-[ T ])$. For this, it suffices to take
\[ \overline{\F} = \Omega_E \cdot \KL\left[ \wh{\Theta}({A'}^-[ T
  ])\pinf\right] \ \subset \ \Omega_{E'}. \] The Gross - Kuz'min
Conjecture herewith states that $\Omega_{E'}(\K)$ is totally ramified
-- up to finite subextensions -- over $\Omega_{E}(\K_{\infty})$. The
field $\overline{\F}$ is an explicit obstruction to the Gross - Kuz'min
Conjecture being true.
\end{remark}

\section{Proof of the Theorem \ref{gc}}
In this chapter we apply the radicals $\wh{\Theta}({A'}^-[ T ])$ and the
insights gained in Proposition \ref{mred},
obtaining large unramified extensions of $\KL_n$. These give
raise to classes $c_n \in A(\KL_n)$ of large order and annihilated by
several augmentation elements: in fact, we obtain a representation of
${A'}^-[ T ]$ in a module $\id{C}$ of sequences of such classes, as
shown in \rf{reps}. Finally, the crucial Proposition \ref{iter} raises by its
repeated application a contradiction to the finiteness of $A^+[ T^* ]$, 
which confirms the Conjecture of Gross and Kuz'min for CM fields.
 
Let $\sigma \in \Gal(\KL/\K_{\infty})$ be a topological generator of
this $\Z_p$-group and let $\tilde{\tau} \in \Gal(\KL/\K)$ be some
lift, with $\tilde{T} = \tilde{\tau}-1$; we write $g = \sigma - 1\in
\Z_p[ \gamma ]$, the generator of the augmentation of this group ring.
Finally, we denote by $\tilde{\jmath} \in \Gal(\KL/\K^+)$ any lift of
complex conjugation.

Let $\KH_n \supset \KL_n$ be the maximal $p$-abelian unramified
extensions of these finite extensions of $\Q$ and let $\KH = \bigcup_n
\KH_n$ be the maximal $p$-abelian unramified extension of $\KL$. 

The $p$-parts of the class groups of $\KL_n$ are $A(\KL_n)$ and they
map to $X_n$ isomorphically via the Artin symbol $\varphi_n$. The 
groups $X_n$ form a projective sequence with respect to the galois
action by restriction: $\rest : X_{n+1} \ra X_n$. We obtain maps
\[ f_{n+1, n} = \varphi_n \circ \rest \circ \varphi_{n+1}^{-1} : A_{n+1} \ra A_n , \]
which arange the classes $A_n$ in a projective sequence, with respect to which
we let $A(\LK) = \varprojlim_n A(\KL_n)$.

We consider the following fields:
\begin{eqnarray*}
\M & = & \KL\left[ \rho^{r(\K) \Theta({A'}^-[ T ])/p^{\infty}} \right] \subset \KH, \\
\M_n & = & \KH_n \cap \M, \\
\underline{\M}_n & = & \KL_n[ \rho^{r(\K) \Theta_n({A'}^-[ T ])_n/p^{n}} ],
\end{eqnarray*}
with the exponent maps $\Theta, \Theta_n$ defined in \rf{thetas}.  Let
$X(\KL) = \Gal(\KH/\KL)$ and $X^{\circ}(\KL) \subset X(\KL)$ be its
$\Z_p$-torsion.  If $\overline{\KH} = \KH^{X^{\circ}}$ is the fixed
field of the torsion, then $\M \subset \overline{\KH}$, since its
radical and thus its galois group are $\Z_p$-free.  We obviously have
$\underline{\M}_n \subseteq \M_n$ while $\bigcup_n \underline{\M}_n =
\bigcup_n \M_n = \M$: but can the two fields differ for some $n$?
Thus, is there an $n$ such that $\underline{\M}_n \neq \M_n$? Before
answering this question, we need some information about galois
actions.
\begin{lemma}
\label{compl}
In the notation introduced above, there is a subfield $\M' \subset
\KH$ such that $\KH = \M \cdot \M'$ and $\left[ \M \cap \M' : \KL
\right ] =: D < \infty$.  Let $M = \Gal(\M/\KL)$ and $M_n =
\Gal(\M_n/\KL_n); \ \underline{M}_n =
\Gal(\underline{\M}_n/\KL_n)$. Then $g, \tilde{T}$ and
$\tilde{\jmath}-1$ annihilate the galois groups $M$ and $M_n$.
\end{lemma}
\begin{proof}
  Let $\D \subset \overline{\KH}$ be cyclic; since $X(\KL)$ is an
  abelian $p$-group, it follows that the infinite cyclic subgroups are
  isomorphic to $\Z_p$, so $\D/\KL$ is a $\Z_p$-extension; since
  $\Z_p$ has no infinite subgroups, infinite galois theory implies
  that $\D \cap \M$ is finite or $\D \subset \M$. Since
  $\Gal(\overline{H}/\KL)$ has an at most countable set of generators,
  we can find by induction some extension $\M'' \subset
  \overline{\KH}$ with $\M'' \cap \M$ of finite exponent over $\KL$
  and $\M'' \cdot \M = \overline{\KH}$. Since $\Gal(\M/\KL)$ has
  finite $p$-rank, it follows that in fact $\M'' \cap \M$ is finite
  over $\KL$. Finally, $\Gal(\KH/\overline{\KH})$ is a $\Z_p$-torsion
  group, and using again the fact that $\Gal(\M/\KL)$ is finitely
  generated, we conclude that there is some extension $\M' \subset
  \KH$ such that $\M' \cap \overline{\KH} = \M''$ and the statement of
  the lemma holds for $\M'$. The annihilation of $\underline{M}_n$ by
  $g, \tilde{T}, \tilde{\jmath}-1$ is an application of Kummer pairing
  using the fact that $T, g, \jmath+1$ annihilate the Kummer radical
  $\rad(\underline{M}_n/\KL_n)$. Since $\M = \bigcup_n
  \underline{\M}_n$, the groups $\underline{M}_n$ form a projective
  sequence with respect to restriction and it follows that $M$ is also
  annihilated by the three augmentation elements. We can now restrict
  $M$ to $M_n$ and deduce that the same holds for the groups $M_n$,
  which completes the proof.
\end{proof}

As a consequence:
\begin{lemma}
\label{lsexp}
Let $v_p(r(\K)) = b \geq 0$. Then for all sufficiently large $n$ we
have $\M_n = \underline{\M}_n$ and
\[ \sexp(M_n) = p^{n-b} = \exp(M_n) . \]
\end{lemma}
\begin{proof}
  Let $n > 0$ be sufficiently large and let $\T' \subset \M_n$ be a
  maximal cyclic extension with intersection $\T = \T' \cap
  \underline{\M}_n$. If $\T = \T'$ for all $\T$, there we have $\M_n =
  \underline{\M}_n$. We assume this is false, so we can choose $\T'$
  such that $\T \subsetneq \T'$. Since $\T' \subset \M = \bigcup_n
  \underline{\M}_n$, there is some $m \geq n$ such that $\T' \cdot
  \KL_m \subset \underline{\M}_m$; we assume that $m$ is minimal with
  this property and let $a'_m \in \in ({A'}^-[ T ])_m$ be chosen such
  that for an integral approximants $t_m \in \Z[ C ]$ with $t_m -
  \Theta_m(a')_m \in p^m \eu{C}$ and $t_m - \Theta_m(a')_m \in p^m
  \eu{C}$ we have $\T' \cdot \KL_m = \KL_m[ \rho^{ r(\K) t_m/p^m} ]$;
  since $\T \in \underline{\M}_n$, it follows that $\T = \KL_n[ \rho^{
    r(\K) t_n/p^n} ]$ with $t_n - t_m \in p^n \Z[ C ]$. Moreover, the
  choice of $m, t_m$ imply that $a'_m \not \in (A^-_m)^p$.

  Suppose first that $b > 0$; the definition of $r(\K)$ implies that
  the field extension $\KL_n[ \rho^{(t \cdot r(\K)/p )/p^n} ]/\KL_n$
  is $p$-ramified for any $t \in \Z[ C ]$, in particular also for
  approximants of some $\Theta_m(a'_m)$. In particular $\T'$ must be
  ramified above $\T$, which settles the case $b > 0$. Suppose now
  that $b = 0$.  Then $\KL_m \T'$ should be abelian over $\KL_n$,
  since $\T' \cap \KL_m = \KL_n$, as one verifies by comparing
  ramification of the two extensions of $\KL_n$. Note that since $b =
  0$, we have $[ \T : \KL_n ] = p^n$. A common galois theoretic
  argument on Kummer radicals shows that in this case $\K_m \T' =
  \K_m[ \rho^{ r(\K) t_m/p^m} ]$ cannot be abelian over $\KL_n$: for
  obtaining abelian Kummer extension, the radical should be
  annihilated by $\omega_n^* \cong p^n \bmod (T, p^{n+1})$. A fortiori
  $\KL_m \T'/\KL_m$ cannot be abelian unless $\T = \T'$. Since this
  holds for all maximal cyclic subextensions $\T' \subset \M_n$, it
  follows that $\underline{\M}_n = \M_n$. From the definition of
  $r(\K)$ and \rf{pn}, we see that $\sexp(M_n) = \exp(M_n) = p^{n-b}$,
  which completes the proof.
\end{proof}
Let $\D = \M \cap \M'$, let $p^d = \exp(\D)$ and let $n_0$ be such
that $[\M_n \cap \D : \KL_n ] = [ \D : \KL ]$ for all $n \geq n_0$ and
let $\D' = \M_{n_0} \cap \D$.  Then $\M_n \cap \D = \KL_n \cdot \D'$
for all $n \geq n_0$.  Let $D_n \subset A_n(\KL_n)$ be the subgroups
defined by $\M' \cap \KH_n = \KH_n^{\varphi(D_n)}$.

Let $a' \in {A'}^-[ T ]$ be a fixed generator with $\Theta(a') =
\theta$.  Let $1 = g_1, g_2, \ldots g_r \in C$, with $r = \prk({A'}^-[
T ])$, be a set of automorphisms such that $g_i a'$ form generate
${A'}^-[ T ]$ as a free $\Z_p$-module. For $x \in {A'}^-[ T ]$
we write
\[ \F_x = \bigcup_n \ \KL_n[ \rho^{r(\K) \Theta_n(x)/p^n} ] =
\KL[ \rho^{\Theta(x)/p^{\infty}} ] \] In particular, letting
$\F'_i = \F_{g_i a'} \subset \M$ we see that $\F_i$ are pairwise
disjoint and span $\M$, by definition of their radicals. We let
\[ \overline{\F}'_i = \M' \cdot \prod_{j \neq i; j = 1}^r \F'_j . \]
For every $n > n_0$ there is a uniquely defined $c'_{i,n} \in
A(\KL_n)$ such that $\varphi(c'_{i,n}) \big \vert_{\overline{\F}'_i} =
0$ and $\Gal(\F'_i/(\D \cap \F'_i)) = < \varphi(c'_{i,n}) >$. 
Let the sequences $c^{'(i)} = (c'_{i,n})_n \in A(\KL)$ we the projective limits 
defined above; they have infinite order and are annihilated by $\tilde{T}$.  
We thus obtained maps
\begin{eqnarray}
\label{reps}
\id{L} & \ : \ & {A'}^-[ T ] \ra A(\KL)[  \ \tilde{T}^*, g; 1-\tilde{\jmath} \ ], \\ \nonumber
\id{L}_n & \ : \ & \ ({A'}^-[ T ])_n \ra A(\KL_n)[  \ \tilde{T}^*, g; 1-\tilde{\jmath} \ ], \quad n > n_0 .
\end{eqnarray}
Note $\ord(\id{L}(x)) = \infty$ for all $x \in {A'}^-[ T ]$, but
$\ord(\id{L}_n(x))$ may not be maximal (i.e. $p^{n-b}$), the
obstruction being the intersection field $\D$. We let $\id{C} =
\id{L}({A'}^-[ T ])$ and $\id{C}_n = \id{L}_n({A'}^-[ T ])$.
There is a fixed constant $b' \geq b$ such that $\sexp(\id{C}_n) =
p^{n-b'}$ for all $n \geq n_0$ and $p^{b-b'} = \exp(\D)$.

It is natural to consider the canonic extension $\Omega_{E'}(\KL_n)[
c(a')^{1/p^n} ]$, which is a $p$-ramified extension. It will be useful
to define $\K'_n = \K_n[ p^{1/p^{b'}} ]$, so $[ \KL_n : \K'_n ] = p^{n
  - b'}$. We choose $c = (c_n) \in \id{C}$ with $\ord(c_n) = p^{n -
  b'}$ and let explicitly $\U_n = \KL_n[ \beta_n^{1/\ord(c_n)} ]$ for
some $\beta_n \in \eu{B}^{\ord(\eu{B})}$ and $\eu{B} \in c_n$, so that
$\U_n \Omega_E(\KL_n)$ is a canonical extension. In general, one
cannot say more about $\beta_n^g$ except that $\beta_n^g \in
\id{O}^{\times}(\KL_n)$. The next Lemma can be applied iteratively and
leads to a contradiction to the finiteness of $A^+[ T^* ]$


\begin{proposition}
\label{iter}
Consider a tower of extensions $\K_n \subseteq \K' \subset \KL'
\subseteq \KL_n$ and let $\sigma' \in \Gal(\KL'/\K')$ be a generator,
$g' = \sigma'-1$ and $\tilde{T}, \tilde{\jmath}$ act by restriction on
$\KL'$. Suppose that $[ \KL' : \K' ] = p^v = p^{n-u}, u \geq 0$ and
consider $c \in A(\KL'_n)$, a class verifying:
\begin{itemize}
\item[ 1. ] For $a \in A := \{ g', \tilde{\jmath} -1, \tilde{T}^* \}$
  we have $c^a = \{ 1 \}$.
\item[ 2. ] The Norm $\Norm_{\KL'/\K'}(c) = 1$.
\item[ 3. ] The order $q = \ord(c) \geq p^v$.
\end{itemize}
Then $q = p^v$ and there is a class $d \in A(\K')$ which lifts to $c$:
\[  c = \iota_{\KL'/\K'} ( d ), \quad d \in A(\K'). \]
\end{proposition}
\begin{proof}
  We shall break down the proof of the Proposition in three steps,
  starting with two technical Lemmata. The notations and premises will
  be the same as in the hypothesis of the Proposition. We let $N =
  N_{\KL'/\K'} = p^{n-u} + g' F_1(g')$ for some distinguished
  polynomial $F_1 \in \Z[ X ]$.

\begin{lemma}
\label{liter1}
Under the hypotheses of the Proposition, $q = p^v$ and for any ideal
$\eu{c} \in c$, there are $\beta, \beta_g \in \KL'$ and $e_n \in \K'$
such that
\begin{eqnarray*}
\begin{array}{l c l c l c l }
e_n & = & N(\beta_g), & \quad \quad & (\beta_g) & = & \eu{c}^{g'}, \\
(\beta) & = & \eu{c}^q & \quad \quad & \beta^{g'} & = & e_n \beta_g^{q} .
\end{array}
\end{eqnarray*}
\end{lemma}
\begin{proof}
Let $\eu{c} \in c$ be any ideal and $\gamma \in \eu{c}$.  Since
$c$ is annihilated by $a \in A$, we have
 \begin{eqnarray}
 \label{A}
  \gamma^{g'}, \gamma^{\tilde{T}^*}, \gamma^{\tilde{\jmath}-1} 
  \in ({\KL'}^{\times})^{q} \cdot E'(\KL'). 
 \end{eqnarray} 
 Let $\eu{c}^{g'} = (\beta_g), \beta_g \in \KL'$. As discussed above,
 $c$ contains no ramified primes, so in particular $\eu{c}$ is not
 ramified. Therefore, $\beta_g \not \in E(\KL') \cdot
 (\KL_n^{\times})^{g'}$ and we deduce $(\gamma^{g'}) = \eu{c}^{g' q} =
 (\beta_g^q)$. There is a unit $\varepsilon_g \in E(\KL')$ with
 $\gamma^{g'} = \varepsilon_g \beta_g^q$.  We have $(N(\gamma_g)) =
 N(\eu{c}^{g'}) = (1)$ and there is thus a further unit $e_n \in \K'$
 such that $N(\beta_g) = e_n$. By Hypothesis 2. of the Proposition,
 there is an $\alpha_0 \in \K'_n$ with $(\alpha_0) = N (\eu{c})$, so
 \[ \alpha_0 \id{O}(\KL_n) = \eu{c}^{p^v + g' F(g')} = \eu{c}^{p^v}
 \cdot \beta_g^{F(g')}), \] and there is an $e \in E(\KL_n)$ with $e
 \alpha^{q p^{-v}}_0 = \gamma \cdot \beta_g^{q p^{-v}F_1(g')}$.  It
 follows for $\beta = \gamma/e \in \KL'$ that $\beta =
 \left(\alpha/\beta_g^{F_1(g')}\right)^{q p^{-v}}$.  Let $x =
 \alpha/\beta_g^{F_1(g')} \in \KL'$ and $q' = q p^{-v} \geq 1$. We
 obtained
 \[ ( \beta ) = \eu{c}^{q} = ( x^{q'} ) \quad \Rightarrow \quad \left(
   \eu{c}^{q/q'} /(x)\right)^{q'} = (1). \] Since $\ord(\eu{c}) =
 \ord(c) = q$, we must have $q' = 1$ and $q = p^v$, which is the first
 claim. With $q = p^v = [ \KL' : \K' ]$ we proceed by acting with $g'$
 upon the previous identity, in which we introduced $\beta_g^{g'
   F_1(g')} = \beta_g^{N-p^v}$, thus obtaining:
\begin{eqnarray}
\label{ats}
\quad \quad \beta^{g'} = e_n^{-1} \beta_g^{p^v} \quad \hbox{with} 
\quad (\beta_g) = \eu{c}^g \quad \hbox{and} \quad 
e_n = N(\beta_g) \in \K'_n.
\end{eqnarray}
This completes the proof of the Lemma, with $\beta, \beta_g, e_n$.
\end{proof}

The next result concerns abelian Kummer extensions using the radical $\beta$:
\begin{lemma}
\label{liter2}
Let $\eu{c} \in c$; $ \beta, \beta_g \in \KL'$ and $e_n \in E(\K')$ be
like in the Lemma \ref{liter1}. Consider the abelian Kummer extension
$\V' = \KL'[ \beta^{1/q} ]$; then $\V'/\KL'$ is unramified outside $p$
and abelian over $\K'$. There is a Kummer extension $\U' / \K'$ which
is fixed by a lift of $\sigma'$ to $\Gal(\V'/\K')$ and is unramified
outside $p$; moreover
\[ \Omega_E(\KL') \cap \V' = \KL' \quad \hbox{ and } \quad \Omega_{E}(\K') \cap \U' = \K'. \]
\end{lemma}
\begin{proof}
  With $\beta$ like in \rf{ats}, we let $\V' = \KL'[ \beta^{1/q} ]
  \subset \Omega_{E'}(\KL')[ c^{1/q} ]$.  Since $(\beta) =
  \eu{c}^{\ord(\eu{c})}$, the extension $\V'/\KL'$ is unramified
  outside $p$. Moreover, we have $\V' \cap \Omega_{E'}(\KL') = \KL'$:
  if not, the intersection $\Omega_{E'}(\KL') \cap \U' \neq \KL'$, so
  it contains a cyclic extension of degree at least $p$. Kummer theory
  yields then that there is a unit $\delta$ such that $\beta \delta =
  x^p \in ({\KL'}^{\times})^p$. Since $\beta \delta \in (\beta)$, it
  follows that $\eu{c}^{\ord(c_n)} = (x^p)$ for some $x \in \KL'$,
  which is absurd. Therefore $\V' \cap \Omega_{E'}(\KL') = \KL'$.

  Let $\K'' = \K'[ e_n^{1/q} ]$ and $\KL'' = \KL' \cdot \K''$; let
  $\chi \in \Gal(\K''/\K') = \Gal(\KL''/\KL')$ be a generator of this
  cyclic group and let $\V'' = \KL'' \cdot \V'$. Since $\V', \KL''$
  are linearly disjoint Kummer extensions of $\KL'$ while $\KL'' =
  \KL' \cdot \K''$, it follows that $\chi$ acts trivially on
  $\Gal(\V'/\KL')$.

  We have shown that $\sigma'(\beta) e_n = \beta x^q$, for some $x \in
  \KL'$; it follows that $\V''$ is galois over $\K''$. The
  equivariance of the Kummer pairing implies that $\Gal(\V''/\K'')$ is
  invariant under the action of $\sigma'$ by
  conjugation. Consequently, $ \V''/\K''$ is abelian, so there is a
  cyclic extension $\U''/\K''$ which is fixed by a lift
  $\tilde{\sigma} \in \Gal(\V''/\K'')$ of $\sigma'$, and such that
  $\V'' = \U'' \cdot \KL'$. We have seen that $\chi \in
  \Gal(\K''/\K')$ acts trivially on $\Gal(\V''/\KL'') \cong \Gal(
  \U''/\K'')$. If follows that $\U''/\K'$ is abelian and we let $\U'
  \subset \U''$ be fixed by a lift of $\chi$ to $\Gal(\U''/\K')$. By
  construction we have $\U' \cdot \KL' = \V'$; since $\V' \cap
  \Omega_{E'}(\KL') = \KL'$, if follows a fortiori that $\U' \cap
  \Omega_{E'}(\K') = \K'$: indeed, if $\K' \subset \D' =
  \Omega_{E'}(\K') \cap \U' \subset \U'$, then $\D'' := \D' \cdot \KL'
  \subset \Omega_{E'}(\KL') \cap \V' = \KL'$, as we have shown
  before. Since $[ \V' : \KL' ] = [ \U' : \K' ] = q$, we have $\U'
  \cap \KL' = \K'$, and we conclude that $\U' \cap \Omega_{E'}(\K') =
  \K'$, as claimed. Since $\KL'/\K'$ is unramified outside $p$ and so
  is $\V'/\KL'$ it follows that $\U' \subset \V'$ is unramified
  outside $p$, over $\K'$, which completes the proof of the Lemma.
\end{proof}
 
We can now complete the proof of the Proposition \ref{iter}.  Since
the cyclic Kummer extension $\U'/\K'$ is $p$-ramified and $\U' \cap
\Omega_{E'}(\K') = \K'$, it follows from ramification theory that the
radical $\upsilon \in \K'; \ \U' = \K'[ \upsilon^{1/p^v} ]$ is power
of an ideal.  More precisely, there is some class $d \in A(\K')$ with
$\ord(d) = p^v = [ \U' : \K' ]$, together with an ideal $\eu{D} \in d$
so that $(\upsilon) = \eu{D}^{p^v}$.  From $\V' = \KL' \cdot \U'$ we
deduce $\KL'[ \upsilon^{1/p^v} ] = \KL'[ \beta^{1/p^v} ]$ and Kummer
theory shows that there is some integer $(u,p) = 1$ such that $\beta
\cdot \upsilon^u = x^{p^v} \in ({\KL'}^{\times})^{p^v}$.  We can
assume without loss of generality that $u = -1$ so, in terms of
ideals, we conclude that

\begin{eqnarray*}
 (\beta) & = & \eu{c}^{p^v} = (\upsilon x^{p^v}) = ((x) \eu{D})^{p^v} \quad \Rightarrow \quad 
\eu{c} = (x) \cdot \iota_{\KL'/\K'}(\eu{D}),   \\
 { } [ \eu{c} ] & = & c = [  \iota_{\KL'/\K'}(\eu{D}) ] = [ \iota_{\KL'/\K'}(d) ],
\end{eqnarray*}
so $c$ is the lift of a class from $\K'$, and $\ord(d) = \ord(c) = q =
p^v$, which completes the proof.
\end{proof}
We may now complete the proof of Theorem \ref{gc} by a repeated
application of the Proposition \ref{iter}.
\begin{proof}
  Let $q' = p^a = \exp(A^+(\K)( T^* ))$, which is a finite module by
  Lemma \ref{noT*}, and let $n > n_0 + 2( a + b')$. Assuming that
  ${A'}^-(T) \neq \{ 1 \}$, we constructed $\id{C} = \id{L}({A'}^-[ T
  ])$ and we choose $c \in \id{C}$ with order $p^v = p^{n-b'}$. We
  apply the Proposition first to $\KL' = \KL_n, c = c_n$ and $\K' =
  \K'_n$, the degree defining herewith the extension $\K'$. The
  Hypotheses 1. and 3. of the Proposition are satisfied by definition,
  while $N(c) = c^{p^v + g' F_1(g')} = 1$, since $c^{g'} = 1$ and
  $\ord(c) = p^v$.  This shows that Hypothesis 2. is also
  verified. The proposition implies that $c$ is the lift of some ideal
  $d \in A(\K'_n)$.
  
  In the next step we apply the Lemma to the fields $\KL' = \K'_n$ and
  $\K' = \K_n$; we let $c = d^{p^a}$, where $d \in A(\K'_n)$ is the
  class which resulted from the first application of the
  Proposition. From the properties of $d$ it follows that $d' :=
  \Norm_{\K'_n/\K_n}(d) \in A^+( T^* )$ and thus, in this case,
  $N(d^{p^a}) = N(c) = (d')^{p ^a} = 1$, which confirms that the
  Hypothesis 2. of the Lemma applies to our data.  Moreover, $\ord(c)
  = p^{v - a} = p^{n - a - b'} > p^{b'} = [ \K'_n : \K_n ] = [ \KL' :
  \K' ]$. The Proposition implies in this case that we must in fact
  have $\ord(c) = p^{b'}$ and $n = 2 b' + a$, which contradicts our
  choice. Note that the second application of the Proposition
  essentially raises a contradiction to the finiteness of $A^+[ T^*
  ]$, as predicted.  The assumption ${A'}^-[ T ] \neq \{ 1 \}$ is thus
  untenable, which completes the proof.
\end{proof}
We end with a brief review of the crucial steps of the proof of
Theorem \ref{gc}.  We proved that $A^+[ T^* ]$ must always be finite
in CM fields and led this to a contradiction of the assumption that
${A'}^-[ T ] \neq \{ 1 \}$. Based on this assumption, we developed
a series of maps that yield in fact isomorphic images in modules such
as $B^-$, in Lemma \ref{t2}, the local multiplicative group
$\prod_{\nu \in C} p^{\Z}$, in Proposition \ref{mred} and finally in
$A(\KL)$, in \rf{reps} and the Lemmata that lead to that
result. Finally, a twofold application of Proposition \ref{iter} raised a
contradiction to the finiteness of $A(\K)^+[ T^* ]$ which shows that
${A'}^-[ T ]$ must be trivial.
\begin{remark}
  Like in the case of Leopoldt's conjecture, it is obvious that the
  Gross-Kuz'min conjecture follows from the Conjecture of
  Shanuel. Less than this conjecture would in fact suffice, e.g. a
  generalization of Baker's result to homogeneous forms in $p$-adic
  logarithms for arbitrary degrees.
\end{remark}

\textbf{Acknowledgment:} I thank Ralph Greenberg, Vicen\c{t}iu
Pa\c{s}ol, Inder-Bir Passi and Machiel van Frankenhuijser for helpful
discussions and comments during the writing of preliminary versions of
this paper.  I am particularly grateful to Grzegorz Banaszak who
helped by detailed discussions on the choice of uniformizors to find
the proof of the general case presented here. I owe particular thanks
to Jens Franke for his careful critical reading and discussion which
lead to rectifications and improvements in the present version.

\bibliographystyle{abbrv}
\bibliography{gross-ext}

\end{document}